\tikzset{
	rot90/.style={anchor=south, rotate=90, inner sep=.5mm}
}
\tikzset{
	rot45/.style={anchor=south, rotate=-45, inner sep=.5mm}
}
    \theoremstyle{plain}
\newtheorem{theorem}{Theorem}[section]
\newtheorem{lemma}[theorem]{Lemma}
\newtheorem{proposition}[theorem]{Proposition}
\newtheorem{question}[theorem]{Question}
\newtheorem{notation}[theorem]{Notation}
    \theoremstyle{definition}
\newtheorem{definition}[theorem]{Definition}
\theoremstyle{remark}
\newtheorem{remark}{Remark}[theorem]
\theoremstyle{plain}
\def\@tocline#1#2#3#4#5#6#7{\relax
  \ifnum #1>\c@tocdepth 
  \else
    \par \addpenalty\@secpenalty\addvspace{#2}%
    \begingroup \hyphenpenalty\@M
    \@ifempty{#4}{%
      \@tempdima\csname r@tocindent\number#1\endcsname\relax
    }{%
      \@tempdima#4\relax
    }%
    \parindent\z@ \leftskip#3\relax \advance\leftskip\@tempdima\relax
    \rightskip\@pnumwidth plus4em \parfillskip-\@pnumwidth
    #5\leavevmode\hskip-\@tempdima
      \ifcase #1
       \or\or \hskip 1em \or \hskip 2em \else \hskip 3em \fi%
      #6\nobreak\relax
    \hfill\hbox to\@pnumwidth{\@tocpagenum{#7}}\par
    \nobreak
    \endgroup
  \fi}
\newenvironment{customthm}[1]
  {\innercustomthm}
  {\endinnercustomthm}
\title{The Borel cohomology of free iterated loop spaces}
\author{Ishan Levy}
\address{Department of Mathematical Sciences, University of Copenhagen, Denmark}
\email{ishanlevy97@gmail.com}\author{Justin Wu}
\address{Department of Mathematics, UC Berkeley, Berkeley, CA, USA}
\email{justinjwu@berkeley.edu}
\begin{document}

\begin{abstract}
    We compute the $\SOr(n+1)$-equivariant mod $2$ Borel cohomology of the free iterated loop space $Z^{S^n}$ when $Z$ is a mod $2$ generalized Eilenberg Mac Lane space. When $n=1$, this recovers Bökstedt and Ottosen's computation for the free loop space. The highlight of our computation is a construction of cohomology classes using an $\mathrm{O}(n)$-equivariant evaluation map and a pushforward map.
\end{abstract}
\maketitle
\setcounter{tocdepth}{1}
\tableofcontents

\section{Introduction}\label{intro}
Let $Z$ be a mod $2$ generalized Eilenberg--Mac Lane space of finite type\footnote{See \Cref{remark:finitetype} for a discussion of the finite type hypothesis, and how it can be avoided.} (which we will call GEM), and let $H^*$ denote the mod $2$ cohomology functor.

In \cite{Bkstedt1999HomotopyOO,Ottosen2003}, Bökstedt and Ottosen gave a functorial computation of $H^*(Z^{S^1}_{h\SOr(2)})$ for such $Z$. To do this, they construct a certain endofunctor $\ell$ of the category of unstable algebras over the Steenrod algebra, and show that $\ell(H^*(Z))$ is naturally isomorphic to $H^*(Z^{S^1}_{h\SOr(2)})$ for $Z$ a $\GEM$, where $\GEM$ is the category of GEMs and homotopy classes of maps\footnote{In fact, they prove this more generally when $Z$ has polynomial cohomology.}.

The main goal of this paper is to extend their work to higher dimensional spheres. That is, we functorially compute an associated graded algebra of $H^*(Z^{S^n}_{h\SOr(n+1)})$ for $n\geq 1$, the $n=1$ case essentially recovering their $\ell$ functor.

For a finite unstable algebra $A$ over the Steenrod algebra, there is a division endofunctor $(-:A)$ of the category of unstable algebras over the Steenrod algebra that is left adjoint to $(-)\otimes A$, and has the property that there is a natural isomorphism $(H^*(Z):H^*(Y)) \cong H^*(Z^Y)$ for $Z$ a GEM. We use $L_n$ to denote the functor $(-:H^*(S^n))$, and we show that $L_n(A)$ comes equipped with a natural derivation of cohomological degree $-n$ which we denote $d$. The functor $L_n$ is very explicit, and we refer the reader to \Cref{omegapresentation} and \Cref{remark:free} for more about it.

\begin{customthm}{A}\label{thm:mainreplacethis}
	Let $Z\in \GEM$, and $n \geq 1$. There is a natural multiplicative filtration on $H^*(Z^{S^n}_{h\SOr(n+1)})$ such that the associated graded is the cohomology of the commutative differential graded algebra (or cdga) with underlying algebra $L_n(H^*(Z))\otimes H^*(\BSO(n+1))$ with differential determined by $x\otimes 1 \mapsto dx\otimes w_{n+1}$ and $1\otimes w_i \mapsto 0$.
\end{customthm}
Here $w_{i}$ is the $i^{th}$ Stiefel--Whitney class in $H^{i}(\BSO(n+1))$. In the case, $n=1$, there are no multiplicative extension problems in \Cref{thm:mainreplacethis}, and so the cohomology of the cdga recovers the functor $\ell$ of B\"okstedt and Ottosen. In the general case, we obtain partial results about the multiplicative extension problems, which are collected in \Cref{verificationsection}. 

Letting $q: Z^{S^n} \to Z^{S^n}_{h\SOr(n+1)}$ denote the map to the homotopy orbits, the proof of \Cref{thm:mainreplacethis} proceeds by analyzing the Serre spectral sequence of the Borel fibration

\begin{center}
	\begin{tikzcd}
		Z^{S^n}\ar[r,"q"] &Z^{S^n}_{h\SOr(n+1)} \ar[r] & BSO(n+1)
	\end{tikzcd}
\end{center}

This spectral sequence can also be obtained as the homotopy fixed point spectral sequence of the $\SOr(n+1)$-action on the cochains of $Z^{S^n}$, and so we refer to it as \textit{the homotopy fixed point spectral sequence}. The $E_2$-term of this homotopy fixed point spectral sequence is $L_n(H^*(Z))\otimes H^*(\BSO(n+1))$, and we show that there are no differentials until the $E_{n+1}$-page, where the differential is given exactly by the cdga differential in the theorem. 

The most subtle point is to show that $E_{n+2} = E_{\infty}$, so that the cohomology of the cdga agrees with the associated graded of the cohomology of $H^*(Z^{S^n}_{h\SOr(n+1)})$. The $E_{n+2}$-page in this situation has the pleasant property that its multiplicative generators lie on the edges of the spectral sequence. To show that these multiplicative generators are permanent cycles, we explicitly construct cohomology classes in $H^*(Z^{S^n}_{h\SOr(n+1)})$ which represent generating classes on the $E_{n+2}$-page, as explained in the theorem below.

\begin{customthm}{B}\label{theorembintro}
	Let $Z$ be a space, and $n \geq 1$. For $0 \leq i \leq n$, there are natural transformations $$\delta: H^j(Z)\to H^{j-n}(Z^{S^n}_{h\SOr(n+1)})$$ $$\phi_i: H^j(Z) \to H^{2j-i}(Z^{S^n}_{h\SOr(n+1)})$$ such that if $Z \in \GEM$, the composite $H^*(Z^{S^n}_{h\SOr(n+1)}) \xrightarrow{q^*}
	H^*(Z^{S^n}) \cong L_n(H^*(Z))$ induced from the map $Z^{S^n} \to Z^{S^n}_{h\SOr(n+1)}$ sends  $\delta(a)$ to $da$, $\phi_i(a)$ to $Sq_ia$ for $i<n$, and $\phi_n(a)$ to $Sq_na + ada$.
	
	Moreover, when $Z \in \GEM$, the images of $\phi_i,\delta$ along with $w_i$ generate $H^*(Z^{S^n}_{h\SOr(n+1)})$ as an algebra.
\end{customthm}

The existence of the operation $\delta$ gives a canonical lift of the derivation $d$ on $L_n(H^*(Z))$ to take values in $H^*(Z^{S^n}_{h\SOr(n+1)})$. The other operations $\phi_i$ should be thought of as lifts of the $E_{n+1}$-power operations on mod $2$ cohomology to take values in the Borel cohomology of the free loop space. Indeed, it follows from the above theorem that $\phi_n$ pulled back along the composite $Z \to Z^{S^n} \to Z^{S^n}_{h\SOr(n+1)}$ gives the Steenrod operation $Sq_n$.

The construction of the operations $\phi_n$ comes from the equivariant evaluation map $Z^{S^n} \to Z^{2}$ evaluating at antipodal points of the sphere. This map is equivariant for the $O(n)$ action, where $O(n)$ is the stabilizer of the antipodal points as a set.

This equivariant evaluation map induces a map
\[ H^*(Z^2_{hC_2}) \otimes H^*(\BSOr(n)) \cong H^*(N^{\Or(n)}_{\SOr(n)}(Z)_{hO(n)}) \to H^*(Z^{S^n}_{h\mathrm{O}(n)})\]

The latter cohomology $H^*(Z^{S^n}_{h\mathrm{O}(n)})$ turns out to be a free module of rank $n$ over $H^*(Z^{S^n}_{h\SOr(n+1)})$ with basis $t^i, 0 \leq i \leq n$, where $t$ is the class coming from $H^1(BC_2)$. We then define the classes $\phi_i$ to be the coefficients of the image of the total power operation class $P(x) \in H^*(Z^2_{hC_2})$ with respect to this basis. We implement the extraction of the coefficients with respect to this basis using a pushforward map in cohomology from the $\mathrm{O}(n)$-homotopy orbits to the $\SOr(n+1)$-homotopy orbits.

In addition to describing the cohomology of $Z^{S^n}_{h\SOr(n+1)}$ as the cohomology of a cdga, we give a presentation of the cohomology of this cdga, in generators and relations similar to Bökstedt and Ottosen's functor $\ell$. 

\begin{customthm}{C}\label{thmcintro}
  Let $Z \in \GEM$, and let $A$ be an unstable algebra over the Steenrod algebra. For $A = H^*(Z)$, the ring $\overline{\ell}_n(H^*(Z))$ (see \Cref{lbardefn} for an explicit presentation) is naturally isomorphic to an associated graded algebra of $H^*(Z^{S^n}_{h\SOr(n+1)})$.
\end{customthm}

In this paper, it is convenient to use the language of zeroth derived functors as ways of explaining the origin of many of the functors we use. Given a contravariant functor out of $(h\cS)^{\ft}$, the homotopy category of finite type spaces, its zeroth derived functor is obtained by restricting to the category of finite Eilenberg Mac Lane spaces $\GEM^{\fin}$, and then left Kan extending along the functor $H^*:\GEM^{\fin} \to \cK$ into the category of unstable algebras over the Steenrod algebra.

In particular \Cref{thmc} is equivalent to the fact that the functor $\overline{\ell}_n$ is the zeroth derived functor of the functor taking a space $Z$ to $\gr H^*(Z^{S^n}_{h\SOr(n+1)})$. We note that if we understood all the relations between the classes in $H^*(Z^{S^n}_{h\SOr(n+1)})$, it would be possible to refine \Cref{thmc} to compute the functor $\ell_n$, which is defined to be the zeroth derived functor of $H^*(Z^{S^n}_{h\SOr(n+1)})$.

\begin{question}\label{notassociatedgraded}
Can one present $\ell_n$, the zeroth derived functor of $Z \mapsto H^*(Z^{S^n}_{h\SOr(n+1)})$, using $\phi_i,\delta,w_i$ as generators?
\end{question}

We also wonder if there is an algebraic way to come up with a presentation for the functor $\ell_n$, similarly to how the zeroth derived functor of $H^*(-^{S^n})$ can be constructed purely algebraically as the left adjoint of $(-)\otimes H^*(S^n)$ in the category of unstable algebras over the Steenrod algebra.

The functors $\ell_n$ and $\overline{\ell}_n$ are like higher dimensional analogs of $\ell$, and can be thought of as a version of negative cyclic homology for the free iterated loop space that is constructed just using Steenrod algebraic data.

We turn to posing some further directions related to our work we think would be interesting to explore.

The zeroth derived functor $\ell_n$ is just the first in a family of (nonabelian) derived functors that approximate $H^*(Z^{S^n}_{h\SOr(n+1)})$. The higher derived functors are defined purely algebraically in relation to the zeroth one.

\begin{question}
Can the higher derived functors of $\ell_n$ be computed, at least in some nontrivial examples?
\end{question}

There is a Bousfield homology spectral sequence whose $E_2$ term is the derived functors of $\ell_n(H^*(Z))$ and which (under favorable conditions) converges to $H^*(Z^{S^n}_{h\SOr(n+1)})$. This was worked out in the case $n=1$ in \cite{bokstedt2004spectral} by Bökstedt and Ottosen, and they moreover found that for $Z=\CC\PP^n$, the spectral sequence degenerates at $E_2$, converging to $H^*((\CC\PP^n)^{S^1}_{hSO(2)})$.

Next, we speculate about the extent to which our results can be generalized.

\begin{question}
For which real $G$-representations $V$ can one compute $H^*(Z^{S(V)}_{hG})$, where $Z \in \GEM$?
\end{question}

In the above question, $S(V)$ are the unit vectors in the representation $G$. We suspect that our results should generalize in a straightforward manner to the standard representations of $\mathrm{U}(n)$ and $\mathrm{Sp}(n)$.

\begin{question}
Are there analogs of our computations at odd primes?
\end{question}

In \cite{Ottosen2003}, Ottosen constructs an analog of $\ell_1$ at odd primes, but it is unclear what the generalization is for larger $n$. Namely,
the construction of $\ell_1$ at odd primes fundamentally uses the subgroups $C_p \subset \SOr(2)$. At the prime $2$, the analog of these for larger $n$ was $\mathrm{O}(n) \subset \SOr(n+1)$, but there isn't an obvious odd primary analog.

\subsection*{Outline of the paper}

This paper is organized as follows:

In Section \ref{nonequivariant}, we study zeroth derived functors, and explain how Lannes' division functor gives an example. We give a presentation of division by $H^*(S^n)$.

In Section \ref{section4}, we explain some facts about $\SOr(n+1)$-spaces relevant to the main results of the paper.

In Section \ref{section5}, we construct classes of $H^*(Z^{S^n}_{h\SOr(n+1)})$ by producing the natural transformations $\delta,\phi_i$, and compute their images in $H^*(Z^{S^n})$.

In Section \ref{sectionmainspectralsequence}, we use the results of Section \ref{section5} to completely describe the homotopy fixed point spectral sequence computing $H^*(Z^{S^n}_{h\SOr(n+1)})$ for $Z \in \GEM$.

In \Cref{verificationsection}, we prove some relations that hold for the classes $\delta,\phi_i$.

In Section \ref{algebraicstuffsection}, we compute the zeroth derived functor of the functor taking a space $Z$ to the associated graded of $H^*(Z^{S^n}_{h\SOr(n+1)})$.

\subsection*{Notation}
\begin{enumerate}
    \item We use $h\cS$ to denote the homotopy category of spaces, and $h\cS^{\ft}$ to denote those which are finite type, ie have finitely many cells in each degree.
    \item We use $\GEM$ to denote the homotopy category of finite type mod $2$ generalized Eilenberg--Mac Lane spaces, and $\GEM^{\fin}$ to denote those which are finite (ie truncated).
    \item We use $H^*(-)$ to denote homology with mod $2$ coefficients.
    \item We use $F_n(-)$ to denote the functor $(-)^{S^n}_{hSO(n+1)}$.
    \item We use $\cU$ to denote the category of unstable modules over the Steenrod algebra as in Schwartz's book \cite[Definition 1.3.1]{lionelschwartzbook}, and use $\cK$ to denote the category of unstable algebras over the Steenrod algebra \cite[Definition 1.4.1]{lionelschwartzbook}.
    \item Given a space $X$ with a $G$-action, we use $X_0$ to denote $X$ with the trivial $G$-action, and use $\alpha:X_0\times G \to X$ to denote the $G$-equivariant action map.
\end{enumerate}
    
\subsection*{Acknowledgements}
We are grateful to Haynes Miller for introducing us to unstable modules over the Steenrod algebra, for suggesting this project, and for helpful conversations and comments. This work was done as part of the MIT PRIMES program, and we thank PRIMES for this research opportunity. We thank Piotr Pstragowski for a helpful conversation about derived functors. The first author was supported by the NSF Graduate Research Fellowship under Grant No. 1745302.

\section{Cohomology and derived functors}\label{nonequivariant}

In this section, we explain the construction and universal properties of the zeroth derived functor of a functor out of $h\cS^{op}$. We explain Lannes' division functors and how they give an example of a zeroth derived functor.

Let $\cU$ denote the symmetric monoidal category of unstable modules over the Steenrod algebra (see \cite{lionelschwartzbook}), so that $\cK$, the category of unstable algebras over the Steenrod algebra, is the category of commutative monoids in $\cU$ satisfying $Sq_0(x) = x^2$, where $Sq_i = Sq^{|x|-i}$.

\begin{lemma}\label{zeroderivedfunctorcharacterization}
    The functor $H^*:(\GEM^{\fin})^{op} \to \cK$ identifies the target with the cocompletion of the source with respect to sifted colimits (in $1$-categories).
    In particular, if $D$ is a $1$-category with sifted colimits, and $F:(\GEM^{\fin})^{op} \to D$ is a functor, then there is a unique sifted colimit preserving functor $\tilde{F}:\cK \to D$ extending $F$ along $H^*$.
\end{lemma}
\begin{proof}
It is easy to see that the category $\cK$ is the same as presheaves out of $(\GEM^{\fin})^{op}$ that preserve products. Indeed, such a presheaf is determined by its value on $K(\ZZ/2\ZZ,m)$, which corresponds to the $m^{\text{th}}$-graded piece of an unstable algebra, and the functoriality corresponds to the Steenrod operations, the multiplication, and the instability condition. The category of product preserving presheaves is the sifted cocompletion, finishing the proof.
\end{proof}

\begin{definition}\label{definition:zerothderivedfunctor}
    Given a functor $F:(h\cS^{\ft})^{op} \to D$, where $D$ has sifted colimits, we can restrict it to $(\GEM^{\fin})^{op}$, and then construct the \textit{zeroth derived functor}, which is the left Kan extension $\overline{F}:\cK \to D$ that exists by the above lemma. Note that there is a natural isomorphism between $\overline{F} \circ H^*$ and $F$ on $(\GEM^{\fin})^{op}$.
\end{definition}

Often, the zeroth derived functor also agrees with $F$ on the larger category $\GEM^{op}$. The lemma below gives a criterion for when this is true.

\begin{definition}\label{defn:postnikovconvergent}
    We say that a functor $F:(h\cS^{\ft})^{op} \to D$ is \textit{Postnikov convergent} if it sends the Postnikov tower of an object of $\GEM$ to a filtered colimit diagram.
\end{definition}

\begin{lemma}\label{lemma:nattransfinitetype}
If $F:(h\cS^{\ft})^{op} \to D$ is a Postnikov convergent functor, and $\overline{F}:\cK \to D$ its zeroth derived functor, then there is a unique natural transformation $\eta:\overline{F}\circ H^*\to F$ extending the natural isomorphism that exists on the subcategory $(\GEM^{\fin})^{op}$. Moreover $\eta$ is an isomorphism on $(\GEM)^{op}$.
\end{lemma}
\begin{proof}
Let $\FF_2\otimes\Sigma^{\infty}_+(-)$ denote the functor taking a space to its $\FF_2$-valued chains, viewed as a spectrum, and let $\Omega^{\infty}$ denote the $0$th space of a spectrum. Then given a space $X$, we can consider $RX = \Omega^{\infty}(\FF_2\otimes \Sigma^{\infty}_+X)$, which is the Eilenberg Mac Lane space whose homotopy groups are the homology of $X$. Since $R$ comes from an adjunction, it is part of a monad, and gives a cosimplicial resolution $R^{\bullet + 1}(X)$ of $X$ in terms of finite-type Eilenberg Mac Lane spaces. We can combine this with the Postnikov tower to get a tower of cosimplicial objects $\tau_{\leq n} R^{\bullet + 1}(X)$, where $\tau_{\leq n}$ denotes the $n$th Postnikov truncation.

The functor $H^*:(h\cS^{\ft})^{op} \to \cK$ sends this diagram to a colimit diagram, since it is Postnikov convergent and $R^{\bullet+1}(X)$ is  the canonical resolution of $H^*(X)$ by free unstable algebras. Since $\overline{F}$ preserves sifted colimits, it follows that $\overline{F}\circ H^*$ also sends this to a colimit diagram. It then follows that there is a unique natural transformation $\eta$ extending the isomorphism on $(\GEM^{\fin})^{op}$ coming from the assembly map of this colimit diagram for $F$, ie the unique map making the square below commute. 
\begin{center}
	\begin{tikzcd}
		 \colim(\overline{F}\circ H^*(\tau_{\leq n}R^{\bullet+1}(X)))\ar[r,"\cong"]\ar[d,"\cong"] & \overline{F}\circ H^*(X)\ar[d,"\eta_X"] \\
		 \colim(F(\tau_{\leq n}R^{\bullet+1}(X)))\ar[r] &F(X) 
	\end{tikzcd}
\end{center}

Since both the source and target of $\eta$ are Postnikov convergent, it follows that $\eta$ is an isomorphism on $\GEM^{op}$.
\end{proof}

\begin{remark}\label{remark:finitetype}
    The reason we work with finite type spaces is that free unstable algebras over the Steenrod algebra come from the cohomology of mod $2$ generalized Eilenberg--Mac Lane spaces only when they are finite type. This issue is not serious, and can be avoided by working with homology instead of cohomology, ie in a category of unstable coalgebras over the Steenrod algebra. Note also that for most functors of interest such as $H^*((-)^{S^n}_{hSO(n+1)})$ which we apply zeroth derived functors to send filtered colimits to limits, and are therefore determined by their value on objects of finite type.
\end{remark}

Now we recall Lannes' division functor, and how it gives an example of a zeroth derived functor. 

\begin{definition}\label{notation:division}
    Given $A \in \cK$ of finite type, the functor $(-)\otimes A$ has a left adjoint \cite[Proposition 3.8.2]{lionelschwartzbook}, which we denote $((-):A)$, and call \textit{division by $A$}.
\end{definition}

\begin{proposition}\label{lannesderivedfunctor}\label{formalnonequivariantisomorphism}
Let $Y\in h\cS^{\ft}$. There is a natural map $\theta_Y:(H^*(Z):H^*(Y)) \to H^*(Z^Y)$ that is an isomorphism when $Z \in (\GEM^{\fin})^{op}$ or $Z\in \GEM$ and $Y$ is a finite space. Moreover, $\theta_Y$
identifies $(H^*(-):H^*(Y))$ with the zeroth derived functor of $H^*(-^Y)$.
\end{proposition}
\begin{proof}
$(-:H^*(Y))$ preserves colimits, so is left Kan extended from $(\GEM^{\fin})^{op}$ by \Cref{zeroderivedfunctorcharacterization}. It is thus enough to produce the natural transformation $\theta_Y$ and check it is an isomorphism on objects of $\GEM$.

There is an evaluation map $\mathrm{ev}_Y:Z^Y\times Y \to Z$ at the level of spaces, which upon taking cohomology gives a map $H^*(Z) \to H^*(Z^Y)\otimes H^*(Y)$. We define $\theta_Y$ to be the map that is mate to this map via the adjunction defining the division functor.

When $Y$ is finite, then both the source and target are Postnikov convergent, so it is enough to see that this comparison map is an isomorphism when $Z \in (\GEM^{\fin})^{op}$. For this we note that the functor $Z \to Z^{Y}$ on $(\GEM^{\fin})^{op}$ is a coproduct preserving endofunctor, and so it suffices to check the result for $Z = K(\FF_2,n)$. But $H^*(Z^Y)$ corepresents the functor that is maps of graded vector spaces out of the dual of $H^*(Y)$ shifted up by $n$, and so does $(H^*(Z):H^*({Y}))$ by the universal property of the left adjoint and the fact that $H^*(Z)$ corepresents $H^n$.
\end{proof}

\begin{notation}\label{notation:thetan}
   We use $L_n(A)$ to denote $(A:H^*(S^n))$ as an endofunctor of $\cK$, and use $\theta_n: L_n (H^*(Z)) \to H^*(Z^{S^n})$ to denote the natural transformation $\theta_{S^n}$ from \Cref{lannesderivedfunctor}.
\end{notation}

The following proposition is well known, but we include a proof.
\begin{proposition}\label{omegapresentation}
As a commutative $A$-algebra, $L_n(A)$ is generated by generators $da$ in degree $|a|-n$ for each $a \in A$, along with the relations:
\begin{enumerate}
    \item $d(a+b) = da + db$
    \item $d(ab) = d(a)b + ad(b)$
    \item $(da)^2 = d(Sq_n a)$
    \item $d(Sq_i a)=0$ for all $n > i \geq 0$. \\ 
\end{enumerate}
The action of the Steenrod algebra is determined by $Sq^i(da) = d(Sq^ia)$ and the Cartan formula, and the universal map $A \to L_n(A)\otimes H^*(S^n)$ sends $a \mapsto a\otimes 1 + da\otimes y_n$ where $y_n$ is the nontrivial element of $H^n(S^n)$.
\begin{proof}
We use $\FF_2\{-\}_{\cK}$ to denote the functor taking a graded $\FF_2$-vector space $V$ to the free object on $V$ in $\cK$, ie the left adjoint of the forgetful functor into graded $\FF_2$-vector spaces. Let $N = H^*(S^n)$. It follows from \Cref{lannesderivedfunctor} that there is an isomorphism $(\FF_2\{V_*\}_{\cK}:N) \cong \FF_2\{V_*\otimes N^*\}_{\cK}$, since $\FF_2\{V_*\otimes N^*\}_{\cK}$ is the cohomology of the mapping space from $S^n$ to the $\GEM$ associated to $V_*$.

There is a unique graded basis $1,d$ for $N^*$, and so the generators in $\FF_2\{V_*\otimes N^*\}_{\cK}$ of the form $v\otimes 1$ we  call $v$, and the ones of the form $v \otimes d$ we call $dv$. Moreover, following the isomorphisms show that the coevaluation is given by $v\mapsto v\otimes1 +dv\otimes y_n$. More generally, given $a \in A \in \cK$, giving a class $a \in A_n$ is the same as a map $a:\FF_2\{\Sigma^n\FF_2\}_{\cK} \to A_n$, and we define $a$ and $da$ to be the images of the corresponding classes after applying $((-):N)$. Clearly relation $(1)$ holds.

Applying $Sq^i$ on the coevaluation map, we see $Sq^ia\mapsto Sq^ia\otimes1 +Sq^i(da)\otimes y_n$ so we must have $dSq^ia = Sq^ida$. From this it follows that relations $(3)$ and $(4)$ hold. Furthermore, applying the formula for the coevaluation on a product, we find $ac \mapsto (a\otimes 1 + da\otimes y_n)(c\otimes 1+dc\otimes y_n)$, yielding relation $(2)$.

Now an arbitrary algebra $A$ can be presented canonically as a pushout \begin{equation*}
\begin{tikzcd}
\FF_2\{A\otimes A\}_{\cK} \otimes \FF_2\{\bigoplus_{0}^\infty \Sigma^iA \}_{\cK}\arrow[d] \arrow[r] & \FF_2\{A\}_{\cK} \arrow[d] \\
0 \arrow[r]     & A
\end{tikzcd}
\end{equation*}
The vertical nonzero map is given by $[a] \mapsto a$ and the horizontal one sends $[a\otimes b]$ to $[a][b]-[ab]$, and $a \in \Sigma^iM$ to $Sq^i[a]-[Sq^ia]$.

Applying $((-):N)$, since $((-):N)$ preserves pushouts, we get a pushout square

\begin{equation*}
\begin{tikzcd}
\FF_2\{A \otimes A\otimes N^*\}_{\cK} \otimes \FF_2\{\bigoplus_{0}^\infty \Sigma^iA\otimes N^*\}_{\cK} \arrow[d] \arrow[r] & \FF_2\{A\otimes N^*\}_{\cK} \arrow[d] \\
0 \arrow[r]     & (A:N)
\end{tikzcd}
\end{equation*}
This gives a presentation for $(A:N)$ as the quotient of $\FF_2\{-\}_{\cK}(A\otimes N^*)$ by relation $(2)$, and the additional relation $Sq^i(da) = dSq^i(a)$.

From this we can extract a presentation for $A:N$ as an algebra. The fact that $Sq^ida = dSq^ia$ simplifies things: The fact that $Sq_ia = 0$ for $i<0$ and $Sq_0a = a^2$ reduces to relations $(3)$ and $(4)$. This presentation then reduces to the presentation of an algebra: $a$ and $da$ are generators for $a \in A$ with relations $(1)-(4)$.
\end{proof}
\end{proposition}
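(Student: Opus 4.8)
The plan is to reduce the computation to the case of free unstable algebras, where division by $N := H^*(S^n)$ is pure formal nonsense, and then transport the answer along the canonical presentation of an arbitrary $A$ by free objects, using that $(-):N$ is a left adjoint. \textbf{Step 1 (free objects).} Let $UF$ denote the left adjoint to the forgetful functor from $\cK$ to graded $\FF_2$-vector spaces. For a graded vector space $V_*$, chaining the division adjunction, the free--forgetful adjunction, and $\otimes$--$\Hom$ duality for $\FF_2$-vector spaces (legitimate since $N$ is finite type) gives
\begin{align*}
\Hom_\cK(UF(V_*):N, M) &\cong \Hom_\cK(UF(V_*), N\otimes M) \cong \Hom_{\FF_2}(V_*, N\otimes M) \\
&\cong \Hom_{\FF_2}(V_*\otimes N^*, M) \cong \Hom_\cK(UF(V_*\otimes N^*), M),
\end{align*}
so by Yoneda $UF(V_*):N \cong UF(V_*\otimes N^*)$. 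Since $N$ has graded basis $1,y$ in degrees $0$ and $n$, the dual $N^*$ has graded basis $1^*, d$ in degrees $0$ and $-n$; I rename the generator $v\otimes 1^*$ of $UF(V_*\otimes N^*)$ by $v$ and $v\otimes d$ by $dv$, so $dv$ lies in degree $|v|-n$. Tracing the identity of $UF(V_*\otimes N^*)$ back through the isomorphisms, and using that the coevaluation $\FF_2 \to N^*\otimes N$ is $1\mapsto 1^*\otimes 1 + d\otimes y$, identifies the universal map $UF(V_*)\to (UF(V_*):N)\otimes N$ with $v\mapsto v\otimes 1 + dv\otimes y$.

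\textbf{Step 2 (relations, free case).} Since the universal map is a morphism in $\cK$, I apply the algebra and Steenrod structure to the formula $v\mapsto v\otimes 1 + dv\otimes y$. Multiplicativity together with $y^2 = 0$ forces $d(ac) = a\,dc + c\,da$; applying $Sq^i$ and using $Sq^k y = 0$ for all $k\geq 1$ (because $y\in H^n(S^n)$, $H^{>n}(S^n) = 0$, and $Sq^n y = y^2 = 0$) forces $Sq^i(da) = d(Sq^i a)$. Feeding the instability of the generator $da$, which has degree $|a|-n$, into this last identity produces the rest: at $i = |a|-n$ it reads $(da)^2 = Sq_0(da) = d(Sq^{|a|-n}a) = d(Sq_n a)$, while for $|a|-n < i \leq |a|$ instability gives $0 = Sq^i(da) = d(Sq^i a)$, i.e.\ $d(Sq_k a) = 0$ for $0\leq k < n$. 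Hence the proposition holds for free $A$, with the Steenrod action governed, by construction, by $Sq^i(da) = d(Sq^i a)$ and the Cartan formula.

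\textbf{Step 3 (general $A$).} Every $A\in\cK$ has its canonical presentation as a reflexive coequalizer of free unstable algebras: $A$ is the quotient of $UF(A)$ imposing $[a][b] = [ab]$ and $Sq^i[a] = [Sq^i a]$. Because $(-):N$ preserves colimits, applying it and invoking Step 1 presents $A:N$ as a quotient of $UF(A\otimes N^*) = UF(A\oplus\Sigma^{-n}A)$ on generators $a$ and $da$. I then read off the relations by computing the image under $(-):N$ of each structure morphism $[a\otimes b]\mapsto [a][b]-[ab]$ and $[a]_{\Sigma^i A}\mapsto Sq^i[a]-[Sq^i a]$: tracking both the $1^*$-shifted and the $d$-shifted copy of each relation generator, the $1^*$-copies merely reconstitute $A$, while the $d$-copies --- extracted exactly as in Step 2 from the coevaluation formula, $y^2 = 0$, and $Sq^k y=0$ for $k\geq 1$ --- become precisely $d(ab) = a\,db + b\,da$ and $Sq^i(da) = d(Sq^i a)$. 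Finally I simplify $Sq^i(da) = d(Sq^i a)$ via the instability of $da$ as in Step 2, which absorbs the $Sq_0$ and negative cases into relations $(3)$ and $(4)$, leaving $(1)$--$(4)$ as the algebra presentation and $Sq^i(da) = d(Sq^i a)$ together with Cartan as the Steenrod action; the universal map formula is inherited from Step 1.

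The step I expect to be the main obstacle is the bookkeeping in Step 3 --- determining exactly what $(-):N$ does to the structure morphisms of the canonical presentation (in particular, confirming that each relation generator contributes its naive image on the $1^*$-summand and the formal $d$-derivative of that image on the $d$-summand) and then checking that the resulting quotient is cut out by precisely relations $(1)$--$(4)$ with nothing left over. The free-object computation and the instability simplifications are routine once this is pinned down.
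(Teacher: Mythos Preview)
Your proposal is correct and follows essentially the same approach as the paper's proof: both compute $(-):N$ on free objects via the Yoneda argument, extract the relations from the coevaluation formula using $y^2=0$ and triviality of the Steenrod action on $y$, and then transfer to arbitrary $A$ via the canonical free presentation and the fact that $(-):N$ preserves colimits. The only cosmetic difference is that the paper phrases the canonical presentation as a pushout rather than a reflexive coequalizer, and is slightly terser about why $Sq^k y=0$.
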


\begin{remark}\label{remark:free}
    We note from the proof of \Cref{omegapresentation} that the functor $(-:H^*(S^n))$ sends $\FF_2\{M\}_{\cK}$ to $\FF_2\{M\oplus \Omega^nM\}_{\cK}$, where $\Omega^n$ shifts the gradings of a grading vector space down by $n$, ie $(\Omega^nM)_i = M_{i+n}$.
\end{remark}

\section{\texorpdfstring{$\SOr(n+1)$}{} actions}\label{section4}
In this section, we collect relevant facts about spaces with $\SOr(n+1)$-actions, to set up our understanding of the $\SOr(n+1)$ action on the mapping space from $S^n$. Many of the results here are well known. Throughout this section, $X$ denotes a left or right $\SOr(n+1)$ space. 
\subsection*{Pushforward}\label{pushforwardsubsection}

An important construction we use later is that of the pushforward on cohomology. It makes sense in great generality, but we are interested in it for a fibration $f:E \to B$ whose fiber $F$ is a compact manifold of dimension $n$. We refer the reader to \cite{pushforwardref} and \cite{ando2018parametrized} for a reference on this construction. The pushforward is a natural map $f_*:H^*(E;H^n(F)) \to H^{*-n}(B)$ that can be defined in two equivalent ways, which we now describe.

Let $\nu$ be the normal bundle of the fibration $f:E \to B$. This is a virtual bundle of dimension $-n$ on $E$, whose fiber is the negative of the vectors tangent to the fiber. Then there is a Gysin map $\Sigma^{\infty}_+B \to E^{\nu}$, where $E^{\nu}$ is the Thom spectrum of $\nu$. The pushforward is then defined as the composite $H^*(E) \cong H^{*-n}(E^{\nu}) \to H^{*-n}(B)$, where the first map is the Thom isomorphism, and the second is the Gysin map.

If $F$ is connected, then the pushforward map can be identified with the map $H^*(E) \to H^*(B;H^n (F)) \cong H^{*-n}(B)$ where $H^*(E)$ is identified with the cohomology of $B$ with coefficients in the local system that is the cochains on the fiber. Indeed this follows from the axiomatic characterization of \cite[Theorem 3.1]{pushforwardref}. In this case, the $n^{th}$ row of the $E_{\infty}$ page of the Serre spectral sequence is a quotient of $H^*(E)$. The $E_{\infty}$ page injects into the $E_2$ page, so the composite $H^*(E) \to E_\infty^{*-n,n} \to E_2^{*-n,n}= H^{*-n}(B)$ gives the pushforward map.

The properties of the pushforward we need are:

\begin{proposition}

Let $f: E \to B$ be a fibration whose fiber $F$ is a compact manifold of dimension $n$. Then we have:
    \begin{enumerate}
    \item $f_*$ is a map of $H^*(B)$ modules (see \cite[page 5]{ando2018parametrized})
    \item $f_*$ is natural with respect to pullback fibrations.
    \item $g_*\circ f_* = (g\circ f)_*$.\footnote{This is because Gysin maps compose, and the Thom isomorphism is multiplicative.}
\end{enumerate}
\end{proposition}

We are primarily interested in the following situation. Suppose $G,H$ are compact Lie groups. If $X$ is a $G$-space and $H \subset G$ is a subgroup, then there is a fibration $X_{hH} \to X_{hG}$ with fiber $G/H$. The pushforward of this fibration is denoted $\tau_H^G$.

\subsection*{Coactions and comparisons}
The group $\mathrm{O}(n)$ sits inside $\SOr(n+1)$ as the stabilizer of any $1$-dimensional subspace of the standard representation of $\SOr(n+1)$. First we compare the cohomologies of the $\mathrm{O}(n)$ and $\SOr(n+1)$ quotients of an $\SOr(n+1)$-space. We recall that $H^*(\BOr(n))$ is a polynomial ring on the Stiefel--Whitney classes $w_i \in H^i(\BOr(n))$ for $1\leq i \leq n$, and $H^*(\BSOr(n))$ is polynomial ring on $w_i$ for $2 \leq i \leq n$.

\begin{lemma}\label{freemodule}
The map $\BOr(n) \to \BSOr(n+1)$ sends $w_i$ in $H^*(\BSO(n+1))$ to $w_i+w_{i-1}w_1$ in $H^*(\BOr(n))$. This realizes $H^*(\BOr(n))$ as a free module over $H^*(\BSOr(n+1))$. Two bases of this free module are given by $1,w_1,\dots, w_n$ and $w_1^i, 0 \leq i \leq n$. 

For any $SO(n+1)$-space $X$, the limit comparison map of $H^*(-)$ applied to $X_{h\mathrm{O}(n)} \cong \BOr(n)\times_{\BSOr(n+1)}X_{h\SOr(n+1)}$ induces an isomorphism

 \[H^*(\BOr(n))\otimes_{H^*(\BSOr(n+1))}H^*(X_{h\SOr(n+1)}) \xrightarrow{\cong} H^*(X_{h\mathrm{O}(n)})\]
\begin{proof}
The map $\BOr(n) \to \BSOr(n+1)$ is given by taking an $n$-plane bundle and adding a copy of its determinant bundle. By the Whitney sum formula, we get that $\sum_0^{n+1} w_i$ in $H^*(\BSOr(n+1))$ pulls back to $(\sum_0^{n} w_i)(1+w_1)$. Thus $w_i$ pulls back to $w_i+w_1w_{i-1}$, where we interpret $w_i$ to be $0$ when it is $0$ for the universal bundle. The fiber of the map is $\RR\PP^n$, where the map to $\BOr(n)$ is the orthogonal complement of the tautological bundle. Thus by the Whitney sum formula, its total Stiefel-Whitney class is $\frac{1}{t+1} = 1+t+\dots+t^n$. Thus $w_i$ get sent to a basis of $H^*(\RR\PP^n)$. Now considering the Serre spectral sequence of the fibration $\RR\PP^n \to \BOr(n) \to \BSOr(n+1)$, and comparing Poincare polynomials, there is no room for differentials, so the spectral sequence degenerates at $E_2$, and the $w_i$ generate $H^*(\BOr(n))$ freely. The basis $w_i$ is related to the classes $w_1^i$ via a triangular matrix with diagonal entries $1$, so both families work as a basis.

For the general case, consider the Eilenberg-Moore spectral sequence of the pullback square
\begin{center}
    \begin{tikzcd}
    X_{h\mathrm{O}(n)} \ar[r]\ar[d]\pullback & X_{h\SOr(n+1)}\ar[d]\\
    \BOr(n) \ar[r] & \BSOr(n+1)
    \end{tikzcd}
\end{center}

Since $H^*(\BOr(n))$ is a free $H^*(\BSOr(n+1))$-module, the $E_2$-term is concentrated in one line and is $H^*(\BOr(n))\otimes_{H^*\BSOr(n+1)}H^*(X_{h\SOr(n+1)})$. Thus the spectral sequence collapses at $E_2$, and gives the result we want.
\end{proof}
\end{lemma}

Next, we study the left action of $\SOr(n+1)$ on $S^n$. Recall that $H^* (\SOr(n))$ is isomorphic to $\otimes_{i \text{ odd}} \mathbb{F}_2 [x_i] / (x_i^{p_i})$ where $|x_i| = i$ and $p_i$ is the smallest power of $2$ such that $|x_i^{p_i}| \geq n$ \cite[\nopp 3D.2]{HatcherAT}. The following lemmas are well known.

\begin{lemma} \label{Snspectralseq}
The Serre spectral sequence of the fibration $\SOr(n) \to \SOr(n+1) \to S^n$ has no differentials. Moreover the nontrivial class $y_n \in H^n(S^n)$ pulls back to $x_i^{2^k}$ where $i2^k = n$.
\end{lemma}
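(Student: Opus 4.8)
The plan is to deduce both statements from a single dimension count, after which the collapsing of the spectral sequence makes the relevant edge homomorphisms completely transparent.

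First, since $S^n$ is a sphere, $E_2^{p,q} = H^p(S^n;H^q(\SOr(n)))$ vanishes unless $p\in\{0,n\}$, so the only potentially nonzero differential is the transgression $d_n\colon E_n^{0,q}\to E_n^{n,q-n+1}$. I would rule it out by comparing total dimensions: it is classical that $H^*(\SOr(m);\FF_2)$ has Poincaré series $\prod_{i=1}^{m-1}(1+t^i)$ (for instance $\SOr(m)$ carries a CW structure with one cell in each degree $\sum_{i\in S}i$ for $S\subseteq\{1,\dots,m-1\}$), so $\dim_{\FF_2}H^*(\SOr(n+1)) = 2^{n} = 2\cdot 2^{n-1} = \dim_{\FF_2}E_2$. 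Since the total dimension of $E_r$ is nonincreasing in $r$, while $E_\infty$ is the associated graded of $H^*(\SOr(n+1))$ and hence has total dimension $2^{n}$, all pages have the same total dimension, forcing every differential to vanish. This proves the first claim.

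Collapse now pins down the edge maps. The base edge homomorphism $\pi^*\colon H^n(S^n)=E_2^{n,0}\to H^n(\SOr(n+1))$ is injective with image the bottom nonzero filtration step $E_\infty^{n,0}\cong\FF_2$, so $\pi^*(y)\neq 0$; and the restriction to the fibre $r\colon H^*(\SOr(n+1))\to H^*(\SOr(n))$ is the projection onto the quotient $E_\infty^{0,*}$ by positive filtration, so $r\circ\pi^*=0$ in positive degrees. Hence $\pi^*(y)$ is the unique nonzero element of $\ker(r)$ in degree $n$. To identify it I would use the ring structure $H^*(\SOr(m);\FF_2)\cong\bigotimes_{i\text{ odd}}\FF_2[x_i]/(x_i^{2^{t(i,m)}})$, where $t(i,m)$ is minimal with $i2^{t(i,m)}\geq m$ and $r$ is the identity on each $x_i$. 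Writing $n=i2^p$ with $i$ odd, the exponent $t(j,\cdot)$ increases from $m=n$ to $m=n+1$ exactly for $j=i$, and by exactly one, so $\ker(r)$ is the ideal generated by the single class $x_i^{2^p}$, which lies in degree $i2^p=n$. The factor $(1+t^n)$ in the Poincaré series gives $\dim H^n(\SOr(n+1)) = \dim H^n(\SOr(n))+1$, and $r$ is surjective in each degree, so $\ker(r)$ is one-dimensional in degree $n$; therefore $\ker(r)_n=\FF_2\cdot x_i^{2^p}$ and $\pi^*(y)=x_i^{2^p}$.

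The only point that needs care is keeping the second half non-circular: the Poincaré series (equivalently the ring structure) of $\SOr(m)$ should be imported from a standard reference rather than derived from the very fibrations $\SOr(m)\to\SOr(m+1)\to S^m$ that the lemma concerns. With that input in hand, everything else is routine bookkeeping with the truncated polynomial algebra and the filtration of the collapsing spectral sequence.
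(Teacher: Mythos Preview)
Your proof is correct and follows essentially the same approach as the paper: compare Poincar\'e polynomials to force the spectral sequence to collapse, then identify $\pi^*(y)$ as the unique generator of $\ker\bigl(H^*(\SOr(n+1))\to H^*(\SOr(n))\bigr)$ in degree $n$. Your version is more explicit about the ring structure and the circularity caveat, but the underlying argument is the same.
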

\begin{proof}
The first statement follows from comparing Poincaré polynomials. Since the Serre spectral sequence degenerates, $y_n$ must generate the kernel of the map $H^*(\SOr(n+1)) \to H^*(\SOr(n))$, but this kernel has a unique generator which is the class indicated in the statement of the lemma.
\end{proof}

\begin{lemma} \label{BSOpathspace}
In the Serre spectral sequence of the fibration of the universal bundle $\SOr(n+1) \to \cdot \to \BSOr(n+1)$, the class $x_i^{2^k}$ transgresses to hit $w_{i2^k+1}$.
\begin{proof}
By induction on $n$ using the map $\BSOr(k) \to \BSOr(k+1)$ induced from the inclusion and comparing Serre spectral sequences for the path space fibrations, we only need to show this for $x_i^{2^k}$ where $2^k=n$. This class pulls back to $0$ in $H^*(\SOr(n))$, so by comparing the path space fibrations, it cannot have any differential other than a transgression. Via the transgression it must hit the class $w_{n+1}$ as it is the only class left on the $E_{n+1}^{n+1,0}$ and the $E_{\infty}$ page has $\FF_2$ in bidegree $0,0$.

However, by induction this is the only class in dimension $n$ whose differential has not been computed, and $w_{n+1}$ is the only class in dimension $n+1$ that has not been hit by a differential.
\end{proof}
\end{lemma}

\begin{lemma}\label{Sncoaction}
 The coaction of $H^*(\SOr(n+1))$ on $H^*(S^n)$ is given by $y_n \mapsto y_n\otimes 1 +1\otimes x_i^{2^k}$ where $i2^k = n$.
 \begin{proof}
 Since the action map $\SOr(n+1)\times S^n \to S^n$ is unital and $H^*(S^n)$ is 2-dimensional, $y_n$ must be sent to $y_n\otimes 1 + 1\otimes c$ for some class $c$. The class $c$ is then the image of $y_n$ in the composite $\SOr(n+1) \to \SOr(n+1)\times S^n \to S^n$. But the composite map is the fibration in Lemma \ref{Snspectralseq}, so we are done by that lemma.
 \end{proof}
\end{lemma}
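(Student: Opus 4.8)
The plan is to use the Hopf-comodule structure on $H^*(S^n)$ to cut the computation down to a single unknown class, and then to identify that class using Lemma \ref{Snspectralseq}.

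First I would note that since the action $\mu\colon\SOr(n+1)\times S^n\to S^n$ is unital, the induced coaction $\mu^*\colon H^*(S^n)\to H^*(S^n)\otimes H^*(\SOr(n+1))$ is counital: composing with the augmentation on the $H^*(\SOr(n+1))$-factor recovers the identity of $H^*(S^n)$. Because $H^*(S^n)=\FF_2\{1,y\}$ is concentrated in degrees $0$ and $n$, the element $\mu^*(y)$ must, for degree reasons, have the form $y\otimes a+1\otimes c$ with $a\in H^0(\SOr(n+1))$ and $c\in H^n(\SOr(n+1))$; applying the counit forces $a=1$. So $\mu^*(y)=y\otimes 1+1\otimes c$, and the whole lemma reduces to identifying the class $c$.

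To pin down $c$, I would compose $\mu$ with the map $\SOr(n+1)\to\SOr(n+1)\times S^n$, $g\mapsto(g,*)$, for a chosen basepoint $*\in S^n$. The composite $\SOr(n+1)\to S^n$ is the orbit map $g\mapsto g\cdot*$, whose fibre over $*$ is the stabilizer $\SOr(n)$; that is, it is exactly the fibre bundle $\SOr(n)\to\SOr(n+1)\to S^n$ of Lemma \ref{Snspectralseq}. On cohomology this composite is $\mu^*$ followed by evaluation at $*$ on the $H^*(S^n)$-factor, so it sends $y\otimes 1\mapsto 0$ and $1\otimes c\mapsto c$; hence it sends $y$ to $c$. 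On the other hand, Lemma \ref{Snspectralseq} says $y$ pulls back along this bundle to $x_i^{2^p}$, where $i2^p=n$. Therefore $c=x_i^{2^p}$, which is the claimed formula.

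The only step carrying genuine content is the identification of the composite $\SOr(n+1)\to\SOr(n+1)\times S^n\xrightarrow{\mu}S^n$ with the bundle projection of Lemma \ref{Snspectralseq} --- that is, recognizing it as the orbit map with stabilizer $\SOr(n)$ --- and that is where I would be careful about basepoint conventions; everything else is routine manipulation of the comodule structure, so I do not anticipate a real obstacle. A slightly more self-contained alternative would be to reprove the needed pullback statement by induction on $n$, reducing to the case $n=2^p$ as in the proof of Lemma \ref{BSOpathspace}, but invoking Lemma \ref{Snspectralseq} directly is cleanest.
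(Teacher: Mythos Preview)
Your proposal is correct and follows essentially the same argument as the paper: use unitality of the action to reduce $\mu^*(y)$ to the form $y\otimes 1+1\otimes c$, then identify $c$ as the pullback of $y$ along the composite $\SOr(n+1)\to\SOr(n+1)\times S^n\to S^n$, which is precisely the bundle of Lemma~\ref{Snspectralseq}. You have simply spelled out a few more details than the paper does.
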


Here is another related spectral sequence we use later. It can also be used to give an alternate proof that $y_n$ pulls back to $x^{2^k}$ in Lemma \ref{Snspectralseq}.

\begin{lemma}\label{borelsnspectralsequence}
The Serre spectral sequence of the fibration $\SOr(n) \to S^{n-1} \to \BSOr(n-1)$ has differentials sending $x_i^{2^k}$ to $w_{i2^k+1}$ for $i<n$.

\begin{proof}
This follows by considering the comparison map from this spectral sequence to the one in Lemma \ref{BSOpathspace}.
\end{proof}
\end{lemma}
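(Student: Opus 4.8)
The plan is to deduce this from Lemma~\ref{BSOpathspace} by realizing the fibration $\SOr(n)\to S^{n-1}\to\text{BSO}(n-1)$ as a pullback of the path space fibration for $\text{BSO}(n)$. Recall that the homotopy fibre of the standard map $\text{BSO}(n-1)\to\text{BSO}(n)$ is $\SOr(n)/\SOr(n-1)=S^{n-1}$, and that extending this fibre sequence one step to the left produces exactly $\SOr(n)\to S^{n-1}\to\text{BSO}(n-1)$ (using $\SOr(n)\simeq\Omega\text{BSO}(n)$). Concretely, pulling back the path space fibration $\SOr(n)\to\cdot\to\text{BSO}(n)$ along $\text{BSO}(n-1)\to\text{BSO}(n)$ gives a map of fibrations
\begin{center}
\begin{tikzcd}
\SOr(n) \ar[r]\ar[d] & S^{n-1} \ar[r]\ar[d] & \text{BSO}(n-1)\ar[d]\\
\SOr(n) \ar[r] & \cdot \ar[r] & \text{BSO}(n)
\end{tikzcd}
\end{center}
in which the left vertical map is the identity of $\SOr(n)$ and the right vertical map induces the restriction $H^*(\text{BSO}(n))=\FF_2[w_2,\dots,w_n]\to\FF_2[w_2,\dots,w_{n-1}]=H^*(\text{BSO}(n-1))$ killing $w_n$.

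Next I would compare the two Serre spectral sequences via the induced map $E_r^{*,*}(\text{path space})\to E_r^{*,*}(\text{source})$. On $E_2$ this map is the identity on the fibre factor $H^*(\SOr(n))$ and is the surjective restriction map on the base factor $H^*(\text{BSO})$, hence it is surjective on all of $E_2$, and since it commutes with differentials it is surjective on every $E_r$. By Lemma~\ref{BSOpathspace} (applied with $n+1$ replaced by $n$), each algebra generator $x_i^{2^p}$ of $H^*(\SOr(n))$ is transgressive in the path space spectral sequence with $d_{i2^p+1}(x_i^{2^p})=w_{i2^p+1}$. A short induction on $r$ then shows that $x_i^{2^p}$ stays transgressive in the source spectral sequence: if $x_i^{2^p}\in E_r^{0,i2^p}(\text{source})$ with $r\le i2^p$, then by surjectivity it lifts to $E_r^{0,i2^p}(\text{path space})$, and the lift is $x_i^{2^p}$ itself because on the fibre column both pages are subgroups of $H^{i2^p}(\SOr(n))$ on which the comparison map is the restriction of the identity; naturality of $d_r$ then forces $d_r(x_i^{2^p})=0$. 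Applying naturality once more at page $i2^p+1$ gives that $d_{i2^p+1}(x_i^{2^p})$ is the image of $w_{i2^p+1}$, namely $w_{i2^p+1}$ when $i2^p+1\le n-1$ and $0$ when $i2^p+1=n$; with the standard convention that $w_n=0$ in $H^*(\text{BSO}(n-1))$ this is the claimed formula. Since the $x_i^{2^p}$ generate $H^*(\SOr(n))$ as an algebra and the spectral sequence is multiplicative, these transgressions pin it down completely.

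The one point I would be most careful about is that the comparison map of spectral sequences runs \emph{from} the path space fibration \emph{to} the fibration of interest, so one is effectively pushing transgressions forward rather than pulling them back; making this legitimate is precisely what the surjectivity on $E_2$ (from surjectivity of $w$-restriction) is for, together with the remark that along the fibre column the comparison is an honest inclusion of subgroups of $H^*(\SOr(n))$. Beyond this, everything is routine naturality of the Serre spectral sequence. (One could instead verify the collapse directly by matching Poincaré series against $H^*(S^{n-1})$, but the comparison argument is cleaner and avoids the combinatorial bookkeeping.)
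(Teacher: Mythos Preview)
Your proof is correct and follows exactly the approach indicated in the paper: pull back the path space fibration $\SOr(n)\to\cdot\to\text{BSO}(n)$ along $\text{BSO}(n-1)\to\text{BSO}(n)$ and use naturality of the Serre spectral sequence together with Lemma~\ref{BSOpathspace}. The paper's proof is a one-line pointer to precisely this comparison; you have simply fleshed out the details. One small remark: the appeal to surjectivity on $E_2$ is not really needed, since on the fibre column the comparison map is induced by the identity of $H^*(\SOr(n))$, so $x_i^{2^p}$ is its own lift at every page where it survives in the path space spectral sequence---that alone suffices for the naturality argument.
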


\subsection*{The free iterated loop space}
We now compute the coaction and evaluation map relevant to the mapping space $Z^{S^n}$ where $Z$ is a space. Recall from \Cref{notation:thetan} that there is a natural transformation $\theta_n: L_n(H^*(Z)) \to H^*(Z^{S^n})$, and so given a class $a$ in $H^*(Z)$, there are corresponding classes $\theta_n(a),\theta_n(da)$ in $H^*(Z^{S^n})$.

The following lemma follows from the definition of $\theta_n$:
\begin{lemma}\label{coevaluationcohomology}
The coevaluation map $H^*(Z)\to H^*(Z^{S^n} \times S^n)$ on cohomology sends the class $a$ to $\theta_n(a)\otimes 1 + \theta_n(da) \otimes y_n$.
\end{lemma}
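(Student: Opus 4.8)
The plan is to chase definitions: the content of the lemma is precisely the compatibility between the adjoint description of $\theta_n$ recorded in Proposition~\ref{formalnonequivariantisomorphism} and the explicit formula for the universal (co)evaluation of Lannes' division adjunction given in Proposition~\ref{omegapresentation}. As the remark preceding the lemma indicates, there is no genuine obstacle here — everything is forced by the way $\theta_n$ is defined — so the work is purely one of bookkeeping.

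First I would recall the definition of $\theta_n$. Since $H^*(S^n)$ is finite type, the Künneth theorem furnishes a natural isomorphism $H^*(S^n \times Z^{S^n}) \cong H^*(Z^{S^n}) \otimes H^*(S^n)$, and the cohomology of the evaluation $ev\colon S^n \times Z^{S^n} \to Z$ is a map $ev^*\colon H^*(Z) \to H^*(Z^{S^n}) \otimes H^*(S^n)$ — this is the ``coevaluation map'' appearing in the statement. By definition, $\theta_n\colon L_n(H^*(Z)) = H^*(Z):H^*(S^n) \to H^*(Z^{S^n})$ is the adjunct of $ev^*$ under the adjunction $(-):H^*(S^n) \dashv (-)\otimes H^*(S^n)$.

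By the defining property of the adjunct, $ev^*$ factors as
\[ H^*(Z) \xrightarrow{\;\mathrm{coev}\;} L_n(H^*(Z))\otimes H^*(S^n) \xrightarrow{\;\theta_n\otimes \mathrm{id}\;} H^*(Z^{S^n})\otimes H^*(S^n), \]
where $\mathrm{coev}$ is the universal map of the division adjunction at $A = H^*(Z)$. Proposition~\ref{omegapresentation} identifies this map explicitly: it sends $a\in H^*(Z)$ to $a\otimes 1 + da\otimes y$, where $y$ is the generator of $H^n(S^n)$. Applying $\theta_n\otimes\mathrm{id}$ gives $ev^*(a) = \theta_n(a)\otimes 1 + \theta_n(da)\otimes y$, which is exactly the claimed formula after transporting along the Künneth isomorphism.

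The only point requiring any care is that the conventions line up: that the Künneth isomorphism is taken in the order placing the $Z^{S^n}$-factor first, so that $\theta_n(a)$ and $\theta_n(da)$ land there while $1$ and $y$ sit in the $H^*(S^n)$-factor, and that the ``coevaluation'' of the statement is literally $ev^*$ rather than a twist of it. Once these identifications are pinned down, the lemma is immediate, and I expect no step to be difficult.
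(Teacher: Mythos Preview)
Your proposal is correct and matches the paper's approach exactly: the paper gives no proof, simply remarking that the lemma ``follows from the definition of $\theta_n$,'' and your argument is precisely the routine unpacking of that definition together with the formula for the universal map in Proposition~\ref{omegapresentation}.
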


\begin{proposition}\label{coaction}
The coaction $H^*(Z^{S^n}) \to H^*(Z^{S^n})\otimes H^*(\SOr(n+1))$ sends $\theta_n(da) \mapsto \theta_n(da)\otimes 1, \theta_n(a) \mapsto \theta_n(a)\otimes 1 +  \theta_n(da)\otimes x_i^{2^k}$ where $i2^k=n$.

\begin{proof}
Consider the commutative diagram
\begin{center}
    \begin{tikzcd}
Z^{S^n}\times S^n \arrow[r, "ev"]                                           & Z                                 \\
Z^{S^n}\times \SOr(n+1)\times S^n \arrow[r, "\alpha_R"] \arrow[u, "\alpha_L"] & Z^{S^n}\times S^n \arrow[u, "ev"]
\end{tikzcd}
\end{center}
Where $\alpha_L$ is the left action on $S^n$ and $\alpha_R$ is the right action on $Z^{S^n}$. We can pull back the class $a \in H^*(Z)$ to $H^*(Z^{S^n}\times \SOr(n+1)\times S^n)$ along $ev$ and $\alpha_L$, which by Lemma \ref{coevaluationcohomology} and Lemma \ref{Sncoaction} is given by $a \mapsto \theta_n(a)\otimes 1 + \theta_n(da)\otimes y_n \mapsto \theta_n(a)\otimes 1\otimes 1 + \theta_n(da)\otimes y_n\otimes 1 + \theta_n(da)\otimes 1 \otimes x_i^{2^k}$. Thus by commutativity, this is where $\alpha_R^*$ sends the class $\theta_n(a)\otimes 1 + \theta_n(da)\otimes y_n$, giving the result.
\end{proof}
\end{proposition}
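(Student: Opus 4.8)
The plan is to exploit the compatibility of the evaluation map $ev\colon Z^{S^n}\times S^n\to Z$ with the right $\SOr(n+1)$-action on $Z^{S^n}$ and the left $\SOr(n+1)$-action on $S^n$. Let $\alpha_L\colon Z^{S^n}\times\SOr(n+1)\times S^n\to Z^{S^n}\times S^n$ be $\mathrm{id}_{Z^{S^n}}$ times the left action $\SOr(n+1)\times S^n\to S^n$, and let $\alpha_R$ be the right action $Z^{S^n}\times\SOr(n+1)\to Z^{S^n}$ times $\mathrm{id}_{S^n}$. Then the square with left edge $\alpha_L$, bottom edge $\alpha_R$, and top and right edges $ev$ (so that $ev\circ\alpha_L=ev\circ\alpha_R$) commutes, since $(f\cdot g)(s)=f(gs)$ is precisely the definition of the right action on the mapping space. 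I would compute the pullback of a class $a\in H^*(Z)$ to $H^*(Z^{S^n}\times\SOr(n+1)\times S^n)$ along the two composites and compare, using the Künneth isomorphism $H^*(Z^{S^n}\times\SOr(n+1)\times S^n)\cong H^*(Z^{S^n}\times\SOr(n+1))\otimes H^*(S^n)$, in which $H^*(S^n)$ is free on $1$ and $y$.

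Along $ev\circ\alpha_L$: by Lemma \ref{coevaluationcohomology}, $ev^*(a)=\theta_n(a)\otimes 1+\theta_n(da)\otimes y$; applying $\alpha_L^*$ and using Lemma \ref{Sncoaction} for the $S^n$-coaction ($y\mapsto y\otimes 1+1\otimes x_i^{2^p}$, $i2^p=n$) sends $a$, with tensor factors ordered as $H^*(Z^{S^n})\otimes H^*(\SOr(n+1))\otimes H^*(S^n)$, to
\[
\theta_n(a)\otimes 1\otimes 1+\theta_n(da)\otimes 1\otimes y+\theta_n(da)\otimes x_i^{2^p}\otimes 1 .
\]
Along $ev\circ\alpha_R$: since $\alpha_R$ is the identity on the $S^n$-coordinate, $\alpha_R^*$ preserves the $H^*(S^n)$-grading and acts on the first two tensor factors through the coaction $c\colon H^*(Z^{S^n})\to H^*(Z^{S^n})\otimes H^*(\SOr(n+1))$ we want to compute, so $a$ is sent to $c(\theta_n(a))\otimes 1+c(\theta_n(da))\otimes y$.

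By commutativity these two elements coincide, and since the comparison respects the $H^*(S^n)$-grading I would read off the coefficients of $1$ and of $y$ separately: the $y$-coefficients give $c(\theta_n(da))=\theta_n(da)\otimes 1$, and then the $1$-coefficients give $c(\theta_n(a))=\theta_n(a)\otimes 1+\theta_n(da)\otimes x_i^{2^p}$, which is the claim. The argument is entirely formal once Lemmas \ref{coevaluationcohomology} and \ref{Sncoaction} are in hand, so I do not expect a real obstacle; the only thing needing care is the tensor-factor bookkeeping — keeping the orders of the $\SOr(n+1)$- and $S^n$-factors consistent between the two computations, and checking that $\alpha_R^*$ genuinely preserves the $H^*(S^n)$-grading so that the final comparison can be made componentwise.
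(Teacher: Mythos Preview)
Your proposal is correct and follows essentially the same argument as the paper: both use the commuting square $ev\circ\alpha_L=ev\circ\alpha_R$, compute the pullback of $a$ along $ev\circ\alpha_L$ via Lemmas~\ref{coevaluationcohomology} and~\ref{Sncoaction}, and then read off the coaction from the other composite. Your version is in fact slightly more explicit in separating the $1$- and $y$-coefficients at the end, and your tensor-factor ordering is consistent with the space $Z^{S^n}\times\SOr(n+1)\times S^n$ (the paper silently swaps the last two factors in its displayed formula), but these are cosmetic differences.
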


Next we end with some computations that are needed later. $\SOr(n)^2_{h\SOr(n-1)}$ denotes the quotient by the diagonal action.

\begin{lemma}\label{sonsquarecohomology}
The Serre spectral sequence of $\SOr(n) \to \SOr(n)^2_{h\SOr(n-1)} \to \SOr(n)_{h\SOr(n-1)}\cong S^{n-1}$ where the second map is given by projection onto one factor degenerates at $E_2$, as does the Eilenberg Moore sequence sequence for the product of two copies of $\SOr(n)/\SOr(n-1)$ over $\BSOr({n-1})$. Moreover, the map $\SOr(n)^2 \to \SOr(n)^2_{h\SOr(n-1)}$ is injective on cohomology.

\begin{proof}
For the first and last statement, one considers the map of spectral sequences coming from the following map of fibrations:
\begin{center}
\begin{tikzcd}
\SOr(n) \ar[d,equal] \arrow[r] & \SOr(n)^2_{h\SOr(n-1)} \arrow[r] & S^{n-1} \\
\SOr(n) \arrow[r] & \SOr(n)^2 \arrow[u] \arrow[r]    & \SOr(n)\ar[u].
\end{tikzcd}
\end{center}
Then one uses the injectivity of the map on cohomology and the lack of differentials in the target to conclude.
For the statement about the Eilenberg Moore spectral sequence, since the map $S^{n-1} \to \BSOr(n-1)$ is trivial on cohomology by Lemma \ref{BSOpathspace}, the $E_2$ term is $H^*(S^{n-1}) \otimes H^*(S^{n-1})\otimes \Tor^{*,*}(H^*(\BSOr(n-1)))$, which cannot have any differentials, as it is the same dimension as the cohomology of $\SOr(n)^2_{h\SOr(n-1)}$.
\end{proof}
\end{lemma}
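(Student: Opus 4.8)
The plan is to deduce all three assertions by comparing the spectral sequence in question against one that manifestly has no differentials, so that the real content is just the computation of the comparison on $E_2$ together with a small Poincar\'e-series bookkeeping.

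For the first and third assertions I would use the comparison of Serre spectral sequences induced by the map of fibrations from the trivial product fibration $\SOr(n)\to\SOr(n)^2\to\SOr(n)$ (projection to a factor) to $\SOr(n)\to\SOr(n)^2_{h\SOr(n-1)}\to S^{n-1}$: on total spaces the quotient $\SOr(n)^2\to\SOr(n)^2_{h\SOr(n-1)}$ by the diagonal $\SOr(n-1)$-action, on base spaces the quotient $\SOr(n)\to\SOr(n)/\SOr(n-1)=S^{n-1}$, and on fibres a homeomorphism $\SOr(n)\to\SOr(n)$. Since a product fibration has no differentials, this gives a map of spectral sequences $\{E_r\}\to\{E_r'\}$ whose target collapses with $E_2'=E_\infty'=H^*(\SOr(n))\otimes H^*(\SOr(n))$, and on $E_2$ the comparison is $\rho\otimes\mathrm{id}$, where $\rho\colon H^*(S^{n-1})\to H^*(\SOr(n))$ is pullback along the quotient $\SOr(n)\to S^{n-1}$. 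By Lemma \ref{Snspectralseq}, $\rho$ sends $1\mapsto1$ and $y\mapsto x_i^{2^p}$ with $i2^p=n-1$, hence is injective, so the comparison is injective on $E_2$. If $\{E_r\}$ had a first nonvanishing differential $d_r$, then $E_r=E_2$ for $\{E_r\}$, the comparison on $E_r$ is still injective, and naturality of $d_r$ together with the vanishing of all differentials in $\{E_r'\}$ would force $d_r=0$; hence $\{E_r\}$ collapses at $E_2$ as well. Finally the induced map on abutments $H^*(\SOr(n)^2_{h\SOr(n-1)})\to H^*(\SOr(n)^2)$ becomes $\rho\otimes\mathrm{id}$ on associated gradeds, and since the Serre filtration here is finite in each degree (the base being a sphere), injectivity on associated gradeds gives injectivity of the map itself.

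For the second assertion I would identify $\SOr(n)^2_{h\SOr(n-1)}$ with the homotopy pullback $S^{n-1}\times_{\text{BSO}(n-1)}S^{n-1}$, using the equivalence $(X\times Y)_{hG}\simeq X_{hG}\times_{BG}Y_{hG}$ and $\SOr(n)_{h\SOr(n-1)}\simeq\SOr(n)/\SOr(n-1)=S^{n-1}$; then $S^{n-1}\to\text{BSO}(n-1)$ is the classifying map of the principal bundle $\SOr(n)\to S^{n-1}$, whose associated vector bundle is $TS^{n-1}$. As $TS^{n-1}$ is stably trivial, this map is trivial on reduced mod $2$ cohomology (alternatively one reads this off Lemma \ref{BSOpathspace}), so both $H^*(\text{BSO}(n-1))$-module structures on $H^*(S^{n-1})$ factor through the augmentation and the Eilenberg--Moore $E_2$-term is
\[\Tor^{H^*(\text{BSO}(n-1))}\big(H^*(S^{n-1}),H^*(S^{n-1})\big)\;\cong\;H^*(S^{n-1})^{\otimes 2}\otimes\Tor^{H^*(\text{BSO}(n-1))}(\FF_2,\FF_2).\]
Since $H^*(\text{BSO}(n-1))=\FF_2[w_2,\dots,w_{n-1}]$ is polynomial, the last factor is exterior on generators of degrees $1,\dots,n-2$, so $E_2$ has Poincar\'e series $(1+t^{n-1})^2\prod_{k=1}^{n-2}(1+t^k)$. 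On the other hand, by the first assertion together with the collapse in Lemma \ref{Snspectralseq} (which gives $H^*(\SOr(n))$ the Poincar\'e series $\prod_{k=1}^{n-1}(1+t^k)$), the abutment $H^*(\SOr(n)^2_{h\SOr(n-1)})$ has Poincar\'e series $(1+t^{n-1})\prod_{k=1}^{n-1}(1+t^k)$, and these coincide. So $E_2$ already has the correct total size, there is no room for any differential, and the Eilenberg--Moore spectral sequence collapses at $E_2$.

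The Poincar\'e-series bookkeeping and the verification that the stated maps really assemble into a map of fibrations and a homotopy pullback square are routine. The step I expect to require the most care is the comparison underlying the first and third assertions: one must be precise that injectivity on $E_2$, combined with the collapse of the target, forces both the collapse of the source and injectivity on the abutments --- the latter using boundedness of the Serre filtration so that injectivity on associated gradeds lifts to the filtered map. Isolating the triviality of $S^{n-1}\to\text{BSO}(n-1)$ on cohomology is the other small input that should not be glossed over, though it is elementary.
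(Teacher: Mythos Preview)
Your proof is correct and follows essentially the same approach as the paper's: the same map of fibrations for the first and third assertions (comparing with the product fibration via injectivity of $H^*(S^{n-1})\to H^*(\SOr(n))$), and the same Eilenberg--Moore/Poincar\'e-series count for the second. You have simply spelled out the details more carefully than the paper does --- in particular the inductive argument that injectivity on $E_2$ plus collapse of the target forces collapse of the source, and the passage from injectivity on associated gradeds to injectivity on abutments --- but nothing in your route differs substantively from the paper's.
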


\section{Equivariant evaluations}\label{section5}

The goal of this section is to construct the natural transformations $\phi_i,\delta$ that we use to prove \Cref{theorembintro}.

\subsection*{Construction and identification of the classes}

Here for an arbitrary space $Z$, we study classes in the cohomology of $Z^{S^n}_{h\SOr(n+1)}$. From the discussion in the introduction, when $Z \in \GEM$, the cohomology should be generated by the classes $w_i$ coming from $\BSOr(n+1)$, along with classes that under the map $q: Z^{S^n} \to Z^{S^n}_{h\SOr(n+1)}$ pullback to $\theta_n(\Sq^i a), i<n, \theta_n(\Sq^n a+ada), \theta_n(da)$ where $a$ is an arbitrary cohomology class in $H^*(Z)$. Our goal is to construct classes $\phi_i(a), \delta(a)$ in $Z^{S^n}_{h\SOr(n+1)}$ that pull back to these (Proposition \ref{propF}). Our construction uses the \textit{equivariant evaluation} maps. 

Before introducing the equivariant evaluation, we recall the following functor:
\begin{notation}\label{norm}
We use $N_{\SOr(n)}^{\Or(n)}(-)$ to denote the Norm/coinduction functor that is the right adjoint of the forgetful functor from spaces with an $\Or(n)$-action to spaces with an $\SOr(n)$-action.

If $Y$ is a space with an $\mathrm{O}(n)$-action, we use $\Delta_{O(n)}:Y \to N_{\SOr(n)}^{\Or(n)}(Y)$ to denote the unit of the adjunction.
\end{notation}

\begin{remark}
    Choose an element $x \in \mathrm{O}(n)$ of order $2$. Then $N_{\SOr(n)}^{\Or(n)}(Y)$ is explicitly given by the space $Y^2$ with $\mathrm{O}(n)$-action so that the $\SOr(n)$-action of an element $z$ sends $(a,b)$ to $(za,xzx^{-1}b)$ and $x$ sends $(a,b)$ to $(b,a)$. Moreover, $\Delta_{O(n)}(a) = (a,xa)$ in this description.
\end{remark}

\begin{notation}\label{notation:eqevaluation}
Let $ev_0$ be the map $Z^{S^n} \to Z_0$ given by evaluating at a basepoint of $S^n$, where $Z_0$ denotes $Z$ with a trivial action. Since $\SOr(n) \subset \SOr(n+1)$ is the stabilizer of a point in $S^n$, this map is $\SOr(n)$-equivariant.

Let $ev_1: Z^{S^n} \to N^{\Or(n)}_{\SOr(n)}(Z)$ be the $\Or(n)$-equivariant map corresponding to $ev_0$ under the adjunction of \Cref{norm}. Note that the $\Or(n)$ action on $N^{\Or(n)}_{\SOr(n)}(Z)$ factors through $C_2 = \Or(n)/\SOr(n)$.

The notation $ev_0,ev_1$ is used both to refer to these maps as well as the induced maps on homotopy quotients
$$ev_0: Z^{S^n}_{h\SOr(n)} \to Z_{h\SOr(n)}\cong Z\times \BSOr(n)$$ $$ev_1:Z^{S^n}_{hO(n)} \to N^{\Or(n)}_{\SOr(n)}(Z)_{hO(n)}$$
\end{notation}

Let $\pi$ denote the projection map $N^{\Or(n)}_{\SOr(n)}(Z)_{hO(n)}=(Z^2\times \BSOr(n))_{hC_2} \to Z^2_{hC_2}$

Recall (see for example \cite{nishida}) that for $a \in H^*(Z)$, there is a total power operation class $P(a)$ in $H^*(Z^2_{hC_2})$ with the property that its pullback to $Z^2$ is $a\otimes a$, and its pullback to the fixed points $Z\times \RR\PP^\infty$ is $\sum_i t^i\Sq_i(a)$. Let $t \in H^*(Z^2_{hC_2})$ denote the pullback of the generator $t \in H^* (BC_2)= H^*(\RR\PP^\infty)$.
\begin{definition}\label{deltaphidefn}

We define the natural map $$\delta: H^* (Z) \to H^{*-n}(Z^{S^n}_{h\SOr(n+1)})$$ to be the composite 
\[H^* (Z) \xrightarrow[]{\pi_1^*} H^*(Z\times \BSOr(n))= H^*(Z_{h\SOr(n)}) \xrightarrow[]{ev_0^*} H^*(Z^{S^n}_{h\SOr(n)}) \xrightarrow{\tau_{\SOr(n)}^{\SOr(n+1)}} H^{*-n}(Z^{S^n}_{h\SOr(n+1)}).\]

For $0\leq i \leq n$ we define the natural map $$\phi_i: H^j(Z) \to H^{2j-i}(Z^{S^n}_{h\SOr(n+1)})$$ by taking $\phi_i (a)$ to be the image of $t^{n-i} P(a)$ in the composite
\[H^*(Z^2_{hC_2}) \xrightarrow{\pi^*} H^*(N^{\Or(n)}_{\SOr(n)}(Z)_{hO(n)}) \xrightarrow{ev_1^*} H^*(Z^{S^n}_{h\mathrm{O}(n)}) \xrightarrow{\tau_{\mathrm{O}(n)}^{\SOr(n+1)}} H^*(Z^{S^n}_{h\SOr(n+1)}).\]
\end{definition}

Our goal is to prove the following result, which is a substantial part of \Cref{theorembintro}:

\begin{proposition}\label{propF}
For the map $q^*: H^*(Z^{S^n}_{h\SOr(n+1)}) \to 
H^*(Z^{S^n})$, we have $q^*(\delta(a))= \theta_n(da)$, $q^*(\phi_i(a)) = \theta_n(Sq_ia)$ for $0\leq i<n$, and $q^*(\phi_n(a)) = \theta_n(Sq_na + ada)$.
\end{proposition}

We begin by showing that $\delta(a)$ has the desired image in $H^*(Z^{S^n})$ in the Lemma below, which is the first part of \Cref{propF}.

\begin{lemma}\label{deltaimage}
The image of $\delta(a)$ via the map $q^*:H^*(Z^{S^n}_{h\SOr(n+1)}) \to H^*(Z^{S^n})$ is the class $\theta_n(da)$.

\begin{proof}
Let $\alpha: Z_0^{S^n}\times \SOr(n+1) \to Z^{S^n}$ be the $\SOr(n+1)$-equivariant map coming from the $\SOr(n+1)$-action on $Z^{S^n}$.
The key observation is that the homotopy quotient of $\alpha$ by $\SOr(n+1)$ is exactly the map $q:Z^{S^n} \to Z^{S^n}_{h\SOr(n+1)}$. Then we can consider the commutative diagram
\begin{center}
\begin{tikzcd}
H^*(Z_{h\SOr(n)}) \arrow[d, "i^*"] \arrow[r, "ev_0^*"] & H^*(Z^{S^n}_{h\SOr(n)}) \arrow[d, "\alpha^*"] \arrow[r, "\tau_{\SOr(n)}^{\SOr(n+1)}"] & H^*(Z^{S^n}_{h\SOr(n+1)}) \arrow[d, "q^*"] \\
H^*(Z) \arrow[r, "ev^*"]                             & H^*(Z_0^{S^n}\times S^n) \arrow[r, "\tau_{\SOr(n)}^{\SOr(n+1)}"]                    & H^*(Z_0^{S^n}).                               
\end{tikzcd}
\end{center}
By the definition of $\delta(a)$, we are trying to understand the image of $a$ from the top left corner to the bottom right corner. But following the lower part of the diagram, and using Lemma \ref{coevaluationcohomology} this gives $a \mapsto \theta_n(a)\otimes 1 + \theta_n(da)\otimes y_n \mapsto \theta_n(da)$.
\end{proof}
\end{lemma}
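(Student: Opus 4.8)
The plan is to exploit the functoriality of the pushforward with respect to pullback of fibrations, applied to the map of fibrations induced by the $\SOr(n+1)$-action map $\alpha: Z_0^{S^n}\times\SOr(n+1)\to Z^{S^n}$. The crucial point, which I would establish first, is that taking the homotopy quotient of $\alpha$ by $\SOr(n+1)$ recovers precisely the fibre inclusion $Z^{S^n}\to F_n(Z)$ that we are trying to understand: indeed $(Z_0^{S^n}\times\SOr(n+1))_{h\SOr(n+1)}\simeq Z_0^{S^n}$ since $\SOr(n+1)$ acts freely, while $(Z^{S^n})_{h\SOr(n+1)}=F_n(Z)$, and the induced map is the one classifying the fibre over the basepoint of $\text{BSO}(n+1)$. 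This lets me write down the commutative diagram in the lemma statement, whose left square commutes because $ev_0$ is the composite of $ev$ with $\alpha$ (both are evaluation-at-a-point maps, and $\alpha$ is compatible with the $\SOr(n)$-structures), and whose right square commutes by naturality of the pushforward $\tau_{\SOr(n)}^{\SOr(n+1)}$ under the pullback of the relevant $\SOr(n+1)/\SOr(n)=S^n$-fibration.

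Granting the diagram, the proof is a diagram chase. By Construction \ref{deltaphidefn}, $\delta(a)$ is the image of $a\in H^*(Z_{h\SOr(n)})=H^*(Z\times\text{BSO}(n))$ under the top row, so its image in $H^*(Z^{S^n})$ — which is computed by the right-hand vertical map $\alpha^*$, that being exactly the fibre-restriction — equals the image of $a$ along the bottom row. Along the bottom row, $a$ first goes to $ev^*(a)\in H^*(Z_0^{S^n}\times S^n)$, which by Lemma \ref{coevaluationcohomology} is $\theta_n(a)\otimes 1+\theta_n(da)\otimes y$; then one applies the pushforward $\tau_{\SOr(n)}^{\SOr(n+1)}: H^*(Z_0^{S^n}\times S^n)\to H^{*-n}(Z_0^{S^n})$, which here is just integration along the $S^n$ fibre (projection onto the top class $y\in H^n(S^n)$ followed by the Thom/Künneth identification). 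Since $\tau$ is a map of $H^*(Z_0^{S^n})$-modules sending $1\otimes y\mapsto 1$ and $1\otimes 1\mapsto 0$, we get $\theta_n(a)\otimes 1+\theta_n(da)\otimes y\mapsto \theta_n(da)$, which is the claim.

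The main obstacle is the first step: carefully justifying that the homotopy quotient of $\alpha$ is the fibre inclusion and that the resulting square of pushforwards commutes. This requires unwinding that the $S^n$-fibration $Z^{S^n}_{h\SOr(n)}\to F_n(Z)$ (with fibre $\SOr(n+1)/\SOr(n)=S^n$) pulls back, along $Z_0^{S^n}\hookrightarrow F_n(Z)$, to the trivial-looking fibration $Z_0^{S^n}\times S^n\to Z_0^{S^n}$, and that under this identification the pushforward $\tau_{\SOr(n)}^{\SOr(n+1)}$ becomes fibre integration over $S^n$ — this is where property (2) of the pushforward (naturality under pullback) and the computation of $H^*(S^n)$-coefficients enter. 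Once that compatibility is in hand, everything else is the formal chase above; no further input beyond Lemma \ref{coevaluationcohomology} and the basic properties of $\tau$ listed in Section \ref{section4} is needed.
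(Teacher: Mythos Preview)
Your proposal is correct and follows essentially the same approach as the paper: both arguments hinge on the observation that the homotopy quotient of the action map $\alpha$ identifies the fibre inclusion $Z^{S^n}\to F_n(Z)$, set up the same commutative diagram comparing the equivariant and non-equivariant pushforwards, and finish by applying Lemma \ref{coevaluationcohomology} together with fibre integration over $S^n$. Your write-up is more explicit about why the squares commute and why the bottom pushforward is integration along the fibre, but the underlying strategy is identical.
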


\begin{definition}\label{swapconstruction}
    Let $Y$ be a space with $\SOr(n+1)$-action. We define $f_Y$ to be the composite

    $$Y_0\times \SOr(n+1) \xrightarrow{\Delta_{O(n)}} N_{\SOr(n)}^{\Or(n)}(Y_0\times \SOr(n+1)) \xrightarrow{N_{\SOr(n)}^{\Or(n)}(\alpha)}N_{\SOr(n)}^{\Or(n)}(Y)$$

    where $\alpha$ is the $\SOr(n+1)$-action map. This is equivalently the map corresponding under the adjunction defining $N^{\Or(n)}_{\SOr(n)}$ to the $SO(n)$-equivariant action map $Y_0\times \SOr(n+1) \to Y$.
\end{definition}

To prove \Cref{propF}, it remains to understand the image of the elements $\phi_i(a)$ under $q^*$. One could try using the exact same strategy as in \Cref{deltaimage} to prove this, but one would have to understand the map $$Z^{S^n}\times \RR\PP^{n} \cong (Z_0^{S^n}\times S^n)_{hC_2} \to (Z^2)_{hC_2}$$ at the level of cohomology, which is $C_2$-equivariant analog of \Cref{coevaluationcohomology}. We don't see any direct way of doing this, so we approach understanding this via considering the diagram below, where $Z'$ denotes $Z^{S^n}_0\times \SOr(n+1)$, and we have identified $\SOr(n+1)/\mathrm{O}(n)=\RR\PP^n$.

\begin{center}
\begin{equation}\label{phidiagram}
\begin{tikzcd}[column sep=5pt]
H^*(Z^2_{hC_2})\arrow[r,"\pi^*"]&H^*(N_{\SOr(n)}^{\Or(n)}(Z)_{h\mathrm{O}(n)}) \arrow[r,"ev_1^*"] \arrow[d,"N_{\SOr(n)}^{\Or(n)}(ev_0)^*"]              &[15pt] H^*(Z^{S^n}_{h\mathrm{O}(n)}) \arrow[r,"\tau_{\mathrm{O}(n)}^{\SOr(n+1)}"] \arrow[d,"\alpha^*"]              &[10pt] H^*(Z^{S^n}_{h\SOr(n+1)}) \arrow[d,"q^*"] \\&
H^*(N_{\SOr(n)}^{\Or(n)}(Z^{S^n})_{h\mathrm{O}(n)}) \arrow[r,"f_{Z^{S^n}}^*"] \arrow[d,"N_{\SOr(n)}^{\Or(n)}(\alpha)^*"]        & H^*(Z^{S^n}_0\times \RR\PP^n) \arrow[r,"\tau_{\mathrm{O}(n)}^{\SOr(n+1)}"] \arrow[d,hook,"(\alpha \times 1)^*"] & H^*(Z^{S^n}_0)                    \\&
H^*(N_{\SOr(n)}^{\Or(n)}(Z')_{h\mathrm{O}(n)}) \arrow[r,"f_{Z'}^*"] & H^*(Z'_0\times \RR\PP^n)&                                
\end{tikzcd}
\end{equation}
\end{center}

In the the diagram above, we would like to compute the image of the class $\phi_i(a)$ along the map $H^*(Z^{S^n}_{h\SOr(n+1)})\to H^*(Z^{S^n})$. Because this class was constructed using $\tau_{\mathrm{O}(n)}^{\SOr(n+1)}$, this is image of the class $t^{n-i}P(a)$ in the diagram above from $H^*(N_{\SOr(n)}^{\Or(n)} (Z)_{h\mathrm{O}(n)})$ to $H^*(Z_0^{S^n})$. Identifying $H^*(Z^{S^n}_0\times \RR\PP^n)$ with $H^*(Z^{S^n})[t]/t^{n+1}$, the pushforward from $H^*(Z_0^{S^n}\times \RR\PP^n)$ to $H^*(Z_0^{S^n})$ extracts the coefficient of $t^n$ by the description of pushforwards in Section \ref{pushforwardsubsection} via the Serre spectral sequence. Thus we focus our efforts on understanding the map $$H^*(N_{\SOr(n)}^{\Or(n)}(Z)_{h\mathrm{O}(n)})\to H^*(Z_0^{S^n}\times \RR\PP^n)$$

The arrow indicated $1\times \alpha^*$ in the above diagram is an injection, so it suffices to understand the composite map $$H^*(N_{\SOr(n)}^{\Or(n)}(Z)_{h\mathrm{O}(n)})\to  H^*(Z_0^{S^n}\times \RR\PP^n) \to H^*(Z'_0\times \RR\PP^n)$$.

To do this, we need to understand the cohomology of $N_{\SOr(n)}^{\Or(n)}(Z')_{h\Or(n)} $. The map $Z' \to Z_0^{S^n}$ given by projection to the first factor induces a map $$ N_{\SOr(n)}^{\Or(n)}(Z')_{h\Or(n)} \to N_{\SOr(n)}^{\Or(n)}(Z_0^{S^n})_{h\mathrm{O}(n)},$$ and the pullback of a class $P(a)$ along this map is given the same name. Similarly, the projection to the second factor $Z' \to \SOr(n+1)$ induces a map $$N_{\SOr(n)}^{\Or(n)}(Z')_{h\Or(n)}  \to N_{\SOr(n)}^{\Or(n)}(SO(n+1))_{h\mathrm{O}(n)}.$$ We construct a class $P(y_n) \in H^*(N_{\SOr(n)}^{\Or(n)}\SOr(n+1)_{h\mathrm{O}(n)})$ whose pullback to $H^* (N_{\SOr(n)}^{\Or(n)}(Z')_{h\mathrm{O}(n)})$ is given the same name. 

\begin{lemma}\label{SOnsqOnquotient}
There is a unique class $P(y_n) \in H^*(N_{\SOr(n)}^{\Or(n)}(\SOr(n+1))_{h\mathrm{O}(n)})$ that pulls back to the class $(\pi_1^*y)(\pi_2^*y)$ in $H^*(N_{\SOr(n)}^{\Or(n)}(\SOr(n+1))_{h\SOr(n)})$, and to $0$ in $H^*(N_{\SOr(n)}^{\Or(n)}(\SOr(n))_{h\mathrm{O}(n)})$. Here $\pi_i$ is the map $(N_{\SOr(n)}^{\Or(n)}\SOr(n+1))_{h\SOr(n)} \to \SOr(n+1)_{h\SOr(n)} = S^n$ given by projection onto the $i$th factor, and $y_n\in H^n(S^n)$ is the nontrivial class.
\begin{proof}
Consider the following map of fibrations:
\begin{center}
\begin{tikzcd}
N_{\SOr(n)}^{\Or(n)}\SOr(n+1) \arrow[r] \arrow[d] & N_{\SOr(n)}^{\Or(n)}\SOr(n+1)_{hC_2} \arrow[r] \arrow[d,"\pi"] & BC_2 \arrow[d] \\
N_{\SOr(n)}^{\Or(n)}\SOr(n+1)_{h\SOr(n)} \arrow[r]  & N_{\SOr(n)}^{\Or(n)}\SOr(n+1)_{h\mathrm{O}(n)} \arrow[r]          & BC_2          
\end{tikzcd}
\end{center}
where the middle map is defined in \Cref{notation:eqevaluation} for $Z = SO(n+1)$. The left and right vertical maps are injective on cohomology (see \Cref{sonsquarecohomology}), so since all the classes of the form $a\otimes a$ of $H^*(\SOr(n+1)^2)\otimes H^0(BC_2)$ are permanent cycles in the Serre spectral sequence of the fibration (because of $P(a)$), the same is true of classes in $H^*(\SOr(n+1)^2)$ that pullback to ones of the form $a\otimes a$, for example $y_n\otimes y_n$. By looking at the relative Serre spectral sequences with respect to $\SOr(n) \subset \SOr(n+1)$, this class is seen to be unique as described.
\end{proof}
\end{lemma}

We now may finish the proof of \Cref{propF}.

\begin{proof}[Proof of \Cref{propF}]
The case of $\delta(a)$ was treated in Lemma \ref{deltaimage}, so we focus on computing the image of $\phi_i(a)$ in $H^*(Z^{S^n})$.

We first claim that the class $P(a) \in H^*(Z^2_{hC_2})$ is sent to the sum of three terms $P(\theta_n(a))+ \tau_{\SOr(n)}^{\mathrm{O}(n)} \theta_n(a)\otimes \theta_n(da) \otimes 1 \otimes y_n + P(\theta_n(da))P(y_n)$ in $H^*(N_{\SOr(n)}^{\Or(n)}(Z')_{h\mathrm{O}(n)})$.

To do this, we observe that the map $N_{\SOr(n)}^{\Or(n)}Z'_{h\mathrm{O}(n)} \to Z^2_{hC_2}$ factors through the map $(Z^{S^n}\times S^n)^2_{hC_2} \to Z^2_{hC_2}$ given by evaluation on each factor. Via the latter map, by Lemma \ref{coevaluationcohomology} $P(a)$ pulls back to $P(\theta_n(a)\otimes 1 + \theta_n(da)\otimes y_n)$. Since $P(cd) = P(c)P(d)$ and $P(c+d) = P(c) + P(d) + \tau_{*}^{C_2}(c\otimes d)$\footnote{These properties of $P$ are well known and easily verified using that the map $(1)$ in \cite[page 1]{nishida} is injective.}, we have

$P(\theta_n(a)\otimes 1+\theta_n(da)\otimes y_n)$ is equal to $P(\theta_n(a)) + P(\theta_n(da))P(y_n) + \tau_{*}^{C_2} (\theta_n(a)\otimes \theta_n(da) \otimes 1 \otimes y_n)$. Pulling this back to $N_{\SOr(n)}^{\Or(n)}(Z')_{h\mathrm{O}(n)}$ yields the claim.

Next we study each of the three terms. We can compute the image of $P(\theta_n(a))$ in $Z_0^{S^n}\times \RR\PP^n$ by examining the commutative square:
\begin{center}
    \begin{tikzcd}
Z'\times\RR\PP^n \arrow[r] \arrow[d] & N_{\SOr(n)}^{\Or(n)}(Z')_{h\mathrm{O}(n)} \arrow[d]          \\
Z_0^{S^n}\times \RR\PP^n \arrow[r]   & (N_{\SOr(n)}^{\Or(n)}(Z_0^{S^n})\times \BSOr(n))_{hC_2}
\end{tikzcd}
\end{center}
$P(\theta_n(a))$ is pulled back from the class with the same name in $H^*(N_{\SOr(n)}^{\Or(n)}(Z_0^{S^n})_{hC_2})$. That class pulls back to $\sum_i \Sq_i \theta_n(a) t^i$ in $H^*(Z_0^{S^n}\times \RR\PP^n)$ essentially by definition of the Steenrod operations.

For $\tau_{\SOr(n)}^{\mathrm{O}(n)} \theta_n(a)\otimes\theta_n(da) \otimes 1 \otimes y_n$, by the naturality of the pushforward, its image in $H^*(Z^{S^n}\times \RR\PP^n)$ can be obtained by pulling back the class $\theta_n(a)\otimes \theta_n(da) \otimes 1 \otimes y_n \in H^*((Z^{S^n}\times S^n)^2)$ along the diagonal to $Z^{S^n}\times S^n$, and then pushing forward to $Z^{S^n}\times \RR\PP^n$. Pulling back yields $\theta_n(ada)\otimes y_n$, and the pushing forward is a product with the pushforward map $S^n \to \RR\PP^n$, giving the class $\theta_n(ada)\otimes t^n$.

Finally, we examine the class $P(y_n)$ using the fact that it is pulled back from $N_{\SOr(n)}^{\Or(n)}(\SOr(n+1))_{h\mathrm{O}(n)}$ and the commutative diagram
\begin{center}
    \begin{tikzcd}
Z'\times\RR\PP^n \arrow[r] \arrow[d] & N_{\SOr(n)}^{\Or(n)}(Z')_{h\mathrm{O}(n)} \arrow[d] \\
\SOr(n+1)\times \RR\PP^n \arrow[r]     & N_{\SOr(n)}^{\Or(n)}(\SOr(n+1))_{h\mathrm{O}(n)}     
\end{tikzcd}
\end{center}
Then, for degree reasons, the class $P(y_n)$ pulls back along the bottom map to $\sum c_i \otimes t^i$, where $c_i$ are classes of degree $i>1$.

Putting it all together, the class $P(\theta_n(a)) \in H^*(Z^2_{hC_2)})$ is sent to $\sum c'_i \otimes t^i + \sum_i\theta_n(Sq_ia)\otimes t^i + \theta_n(ada)\otimes t^n$ in $H^*(Z'\times \RR\PP^n)$ where $c'_i$ are classes of positive degree. Because in the commutative diagram  (\ref{phidiagram}) above, the map indicated by $\hookrightarrow$ is injective, this means that $P(\theta_n(a))$ is sent to $\sum_0^{n}\theta_n(Sq_ia) t^i + \theta_n(ada)t^n$ in $H^*(Z^{S^n})\times \RR\PP^n$. Since the pushforward map $\tau_{\mathrm{O}(n)}^{\SOr(n+1)}:H^*(Z^{S^n}_0 \times \mathbb{RP}^n) \to H^*(Z^{S^n}_0)$ extracts the power of $t^n$, $t^i P(a)$ (and thus $\phi_i(a)$) is sent to $\theta_n(Sq_{n-i}a)$ for $i\neq 0$ and $\theta_n(Sq_na-ada)$ for $i=0$.
\end{proof}

\section{The homotopy fixed point spectral sequence}\label{sectionmainspectralsequence}

Recall that there is a homotopy fixed point spectral sequence computing $H^*(Z^{S^n}_{h\SOr(n+1)})$ with $E_2$-term $H^*(Z^{S^n})\otimes H^*(\BSO(n+1))$. This homotopy fixed point spectral sequence can also be interpreted as the Serre spectral sequence associated to the fibration $Z^{S^n} \to Z^{S^n}_{h\SOr(n+1)} \to \BSO(n+1)$. The goal of this section is to completely compute this spectral sequence when $Z \in \GEM$, thereby determining the Borel cohomology of the free iterated loop space of such a space, proving \Cref{thm:mainreplacethis}. A key step is to know that the spectral sequence degenerates at the $E_{n+2}$-page, which uses the results of \Cref{section5}, and also proves \Cref{theorembintro}.

\begin{proposition}\label{propD}
For $Z \in \GEM$, there are no differentials in the homotopy fixed point spectra sequence until the $E_{n+1}$ page. On the $E_{n+1}$ page, the differential is given by $d_{n+1}(\theta_n(a))= w_{n+1}\theta_n(da)$ for $a \in H^*(Z^{S^n})$.
\end{proposition}
\begin{proof}
Via the description of \Cref{omegapresentation} and \Cref{formalnonequivariantisomorphism}, the cohomology $H^*(Z^{S^n})$ is computed by the functor $L_n(H^*(Z))$. In particular, it is generated by classes of the form $\theta_n(a)$ and $\theta_n(da)$, where $a$ is a class in $H^*(Z)$ pulled back via evaluation at a point. By the Leibniz rule, it suffices to just understand differentials on $\theta_n(a)$, and $\theta_n(da)$ for these classes. But by naturality of the differential and $\theta$, and the fact that $a$ is pulled back from an Eilenberg--Mac Lane space, we just need to prove the assertions when $Z = K(\ZZ/2,m)$ and the cohomology of the fiber is a free unstable algebra on $a$ and $da$. By Lemma \ref{deltaimage}, the class $\theta_n(da)$ survives to $E_{\infty}$, so the unstable algebra generated by $\theta_n(da)$ cannot have any differentials by the universality argument along with the Leibniz rule. 

We first claim that $\theta_n(a)$ has no differentials before the $E_{n+1}$-page of the spectral sequence. To see this, we note that an earlier differential would be detected in $H^*(Z^{S^n}_{h\SOr(n)})$, so it suffices to show the homotopy fixed point spectral sequence computing $H^*(Z^{S^n}_{h\SOr(n)})$ degenerates. But the $\SOr(n)$-equivariant evaluation map $\mathrm{ev}_0:Z^{S^n} \to Z_0$ exhibits $a$ as a permanent cycle. This also shows that the $d_{n+1}$-differential cannot have terms involving $w_i$ for $i<n+1$.

To see the claimed $d_{n+1}$-differential, we consider the $\SOr(n+1)$-equivariant evaluation map $Z^{S^n}\times S^n\to Z_0$, and the induced map on homotopy fixed point spectral sequences. The homotopy fixed point spectral sequence for $Z_0$ degenerates since the action is trivial, so $a$ has no differentials. On the other hand, $a$ pulls back to $\theta_n(a)\otimes 1 + \theta_n(da) \otimes y_n$. In the homotopy fixed point spectral sequence for $S^n$, there is a $d_{n+1}$-differential $dy_n = w_{n+1}$ since the orbit is $\BSO(n)$. Thus the desired differential follows from the Leibniz rule since $0 = d_{n+1}(\theta_n(a)\otimes 1 + \theta_n(da) \otimes y_n) = d_{n+1}\theta_n(a)\otimes 1 + \theta_n(da) \otimes w_{n+1} = (d_{n+1}\theta_n(a) + \theta_n(da) w_{n+1} )\otimes 1$.
\end{proof}
Below is a picture of part of the homotopy fixed point spectral sequence, where $a \in H^m(Z)$.
\begin{center}
\begin{sseqdata}[ scale = 0.8, name = basic,
cohomological Serre grading ,xscale=1.5, yscale = 0.8, classes = {draw = none }, y axis gap = 28pt, class labels = { font = \small },y tick gap=0.7cm, right clip padding = 0.8cm,x tick handler = 
{
\ifnum#1 < 6\relax
    \ifnum#1 > 1
        #1
    \fi 
\else 
    \ifnum#1 = 6\relax 
       ...
    \else 
        n+1
    \fi 
\fi 
}, y tick handler = {
\ifnum#1 = 10
    2m-i
\fi 
\ifnum#1 = 2
    m-n 
\fi 
\ifnum#1 = 8
    m
\else 
\fi 
}]

\class["w_{2}"](2,0)
\class["w_{3}"](3,0)
\class["w_{4}"](4,0)
\class["w_{5}"](5,0)
\class["..."](6,0)
\class["w_{n+1}"](7,0)
\class["\theta_n(Sq_1 da)"](0,7)
\class["\theta_n(a)"](0,8)
\class["\theta_n(da)"](0,2)
\class["\theta_n(Sq_i a)"](0,10)
\class["\theta_n(da) \otimes w_{n+1}"](7,2)
\d7(0,8)
\end{sseqdata}
\printpage[ name = basic, page = 7 ] 

\end{center}

\begin{remark}\label{commute}
Note that the $E_{n+1}$ differential commutes with the Steenrod algebra action on the columns of the $E_2$-page of the spectral sequence. This is because it is given by the derivation $d$ multiplied by $w_{n+1}$, and $d$ commutes with the action of the Steenrod algebra by \Cref{omegapresentation}.
\end{remark}

We introduce some notation for dealing with excess of admissible sequences over the Steenrod algebra, using lower indices:

\begin{notation}\label{notation:admissible}
    Given a multi-index of nonnegative integers $(a_1,\dots,a_k)$, we say that the sequence is admissible of excess $<i$ in degree $m$ if the sequence of Steenrod operations $Sq_{a_1}\dots Sq_{a_k}$ acting on a class in degree $m$ is an admissible sequence of excess $<i$. This is equivalent to the condition that the $m-a_1 \leq i$, and the sequence is nondecreasing. Let $A_{b,c}$ be the set of admissible sequences of Steenrod operations of excess $<b$ in degree $c$.
\end{notation}

\begin{proposition}\label{generators of ss}
For $Z\in \GEM$, for $0 \leq i < n$ , the classes $\theta_n(da),\theta_n(Sq_ia), \theta_n(Sq_na+ada)$ for $a \in H^*(Z)$ along with the $w_i$ for $2 \leq i \leq n+1$  generate the $E_{n+2}$ page of the homotopy fixed point spectral sequence as an algebra.
\begin{proof}

Since the differential for the $E_{n+1}$ page is $d_{n+1}(\theta_n(a)) = w_{n+1}\theta_n(da)$, it suffices to show that $\ker(d) \subset H^*(Z^{S^n})$ is generated by $\theta_n(Sq_i a), \theta_n(Sq_n a + ada), \theta_n(da)$. Because all of the elements that are of the form $da$ are in the image, this is true if and only if the cohomology of $d$ acting on $H^*(Z^{S^n}) = L_n(H^*(Z))$ is generated by $Sq_ia$, $Sq_na+ada$ for $i <n$. 

Write $Z = \prod_{s \in S_m}K(\ZZ/2,m)$, where $S_*$ is a finite nonnegatively graded set. Then $L_n(H^*(Z))$ with its differential is a tensor product of $L_n(H^*(K(\ZZ/2,m)))$ over the set $S$. Thus we can reduce to the case when $Z = K(\ZZ/2,m)$. Denote $Sq_I = Sq_{a_1}Sq_{a_2}\ldots Sq_{a_k}$ for a multi-index $I = (a_1,\dots,a_k)$. Then $Sq_I \iota$ generate $H^*(Z)$ as a polynomial algebra when $I \in A_{m,m}$ (see \Cref{notation:admissible}).

$L_n(H^*(Z))$ is generated as an algebra by 
\begin{itemize}
    \item $Sq_I \iota$ with leading term $Sq_i$ for $i\neq n$ and $I \in A_{m,m}$
    \item $dSq_I\iota$ for $I \in A_{m-n,m}$
    \item $Sq_n Sq_I \iota + Sq_I \iota dSq_I \iota$ for $(n,I) \in A_{m,m}$
    
\end{itemize}
Indeed, these generators are obtained by adding decomposable elements to the usual set of generators. Next, observe that the generators listed above that are nonzero are actually a set of \textit{free generators} in the sense that the only relations among them are that $x^2 = x$ when $x$ is a generator of degree $0$.

$d(Sq_I \iota)$ is $0$ when the first term is $Sq_i$ with $i<n$, $d(Sq_I \iota)$ is $Sq_{I-n}d\iota$ where $I-n$ is the sequence where $n$ is subtracted from every term in $I$. Thus by pairing up each generator $x$ with $dx$ we have decomposed $H^*(X)$ as a tensor product of differential graded algebras. By the Kunneth formula, the cohomology of the differential graded algebra is generated by
\begin{enumerate}[label = (G\arabic*)]
    \item $Sq_0Sq_I \iota = (Sq_I \iota)^2$ where $I \in A_{m,m}$ has leading term $Sq_i$ for $i> n$.
    \item $Sq_I \iota$ with leading term $Sq_i$ for $i< n$.
    \item $Sq_n Sq_I \iota + Sq_I \iota dSq_I \iota$ for $(n,I) \in A_{m,m}$.
\end{enumerate}

We now analyze the cases $m\leq n$ explicitly for the reader.

In the case $m < n$, the differential $d$ on $L_n (H^*(Z))$ is zero, so the $E_{n+1}$ and $E_{n+2}$ pages are the same. On the other hand, $Sq_m a = a$ which along with the $w_i$ generate the $E_{n+1}$ page, so the claim holds.

In the case $m=n$, there is an `unusual' tensor factor which is the cdga $\FF_2[\iota,d\iota]/((d\iota)^2 = d\iota)$, and we can see that the cohomology is generated by $\iota^2$, $\iota (1+d\iota)$, which are in (G1) and (G3) respectively.
\end{proof}

\end{proposition}

We are now ready to prove \Cref{thm:mainreplacethis}, which we reformulate below:
\begin{theorem}[{\Cref{thm:mainreplacethis}}]\label{mainspecseq}
For $Z\in \GEM$, there are no differentials in the homotopy fixed point spectral sequence until the $E_{n+1}$ page. On the $E_{n+1}$ page, the differentials are determined by $d_{n+1}(\theta_n(a))= w_{n+1}\theta_n(da)$ and $d_{n+1}(\theta_n(da)) = 0$ for $a \in L_n(H^*(Z))$. Moreover, $E_{n+2} = E_{\infty}$.
\end{theorem}
\begin{proof}
This follows immediately from Proposition \ref{propF}, \ref{propD}, and \ref{generators of ss}.
\end{proof}

Next we prove \Cref{theorembintro}, which we reproduce below for convenience:
\begin{theorem}[{\Cref{theorembintro}}]\label{thmb}
	Let $Z$ be a space, and $n \geq 1$. For $0 \leq i \leq n$, there are natural transformations $$\delta: H^j(Z)\to H^{j-n}(Z^{S^n}_{h\SOr(n+1)})$$ $$\phi_i: H^j(Z) \to H^{2j-i}(Z^{S^n}_{h\SOr(n+1)})$$ such that if $Z \in \GEM$, the composite $H^*(Z^{S^n}_{h\SOr(n+1)}) \xrightarrow{q^*}
	H^*(Z^{S^n}) \cong L_n(H^*(Z))$ induced from the map $Z^{S^n} \to Z^{S^n}_{h\SOr(n+1)}$ sends  $\delta(a)$ to $da$, $\phi_i(a)$ to $Sq_ia$ for $i<n$, and $\phi_n(a)$ to $Sq_na + ada$.
	
	Moreover, when $Z \in \GEM$, the images of $\phi_i,\delta$ along with $w_i$ generate $H^*(Z^{S^n}_{h\SOr(n+1)})$ as an algebra.
\end{theorem}
\begin{proof}
Everything but the last statement is in Proposition \ref{propF}. For the last statement, by Proposition \ref{generators of ss} we learn that the image of the natural transformations generate the associated graded algebra, and hence also generate $H^*(Z^{S^n}_{h\SOr(n+1)})$ itself.
\end{proof}

\begin{remark}
\Cref{theorembintro} says that when $Z\in \GEM$, then all of the classes in $H^*(Z^{S^n}_{h\SOr(n+1)})$ can be constructed using $w_i, \delta, \phi_i$. The origin of $\delta,\phi_i$ are the equivariant evaluation maps $ev_0,ev_1$, so it's surprising that these classes account for the entire cohomology of $Z^{S^n}_{h\SOr(n+1)}$. Note that by \Cref{freemodule}, for any $\SOr(n+1)$-space, $H^*(X_{h\mathrm{O}(n)})$ is a free module over $H^*(X_{h\SOr(n+1)})$ and the pushforward is a surjective module map. Thus the crucial map in the construction of $\phi_i$ is not the pushforward, but rather the evaluation map.
\end{remark}

\section{Some relations between classes}\label{verificationsection}

The goal of this section is to make progress towards computing the cohomology of $Z^{S^n}_{h\SOr(n+1)}$ for $Z \in \GEM$, as an unstable algebra over the Steenrod algebra. From \Cref{thmb} we know that generators for $H^*(Z^{S^n}_{h\SOr(n+1)})$ are given by $w_i, \delta(a), \phi_i(a)$ for $a \in H^*(Z)$. Here we collect some relations that hold among these classes. The goal of this section is to prove the proposition below:
\begin{proposition}\label{extrarelations}
The following relations hold ($\delta_{i,j}$ is the Kronecker delta).\footnote{The numbering of the relations is so as to agree with the numbering in Definition \ref{lbardefn}.}

\begin{itemize}
    \item [(1)]$\phi_i(a+b) = \phi_i(a) + \phi_i(b)+w_{n-i}\delta(ab)$ ($w_1=0,w_0=1$)
    \item [(2)]$\delta(a+b) = \delta(a)+\delta(b)$
\item [(4)] $w_{n+1}\delta(a) = 0$
    \item [(6)]$\phi_k(ab) = \sum_{i+j=k}\phi_i(a)\phi_j(b) + \sum_{\ell = n+1}^{2n} \sum_{i+j=\ell} \phi_i(a)\phi_j(b)\left( \sum_{\substack{2 \leq \alpha_1 \ldots \alpha_m \leq n+1\\ \alpha_1 + \ldots \alpha_m = \ell-k\\ \alpha_m>n-k} }\prod_{f=1}^m w_{\alpha_f}\right)$
    \item [(7)]    $Sq\phi_{n-k}(a)=\sum_{j\geq0} \sum_{i}\binom{k+|a|-j}{i-2j}\phi_{n-i-k+2j}(Sq^i a)+\delta_{k0}\sum_{2j<i}\delta(Sq^j a\times Sq^{i-j} a)$
    \item [(8)] $Sq^i\delta(a)=\delta(Sq^i a)$
    \item [(9)]$Sq_i(\delta(a)) = Sq_i({\phi}_j(a)) = 0$ for $i <0$\footnote{This relation can be ignored because it is the instability condition.}
    \item [(10)]$Sq_0({\delta}(a))= {\delta}(a)^2$
\end{itemize}

\end{proposition}

To begin verifying the relations, we study the following diagram, where $\SOr(n)$ acts trivially on the spaces in the left side of the diagram:
\begin{center}
\begin{equation}\tag{i}\label{deltadiagram}
\begin{tikzcd}
H^*(Z_{h\SOr(n)}) \arrow[r, "ev_0^*"]                                                           & H^*(Z^{S^n}_{h\SOr(n)}) \arrow[r, "\tau_{\SOr(n)}^{\SOr(n+1)}"]\ar[d,equal] & H^*(Z^{S^n}_{h\SOr(n+1)}) \\
H^*(N_{\SOr(n)}^{\Or(n)} (Z)_{h\SOr(n)}) \arrow[d, "\tau_{\SOr(n)}^{\mathrm{O}(n)}"'] \arrow[r, "ev_1^*"] \arrow[u, "\Delta^*"] & H^*(Z^{S^n}_{h\SOr(n)}) \arrow[d, "\tau_{\SOr(n)}^{\mathrm{O}(n)}"']   &                         \\
H^*(N_{\SOr(n)}^{\Or(n)} (Z)_{h\mathrm{O}(n)}) \arrow[r, "ev_1^*"]                                                          & H^*(Z^{S^n}_{h\mathrm{O}(n)}) \arrow[ruu, "\tau_{\mathrm{O}(n)}^{\SOr(n+1)}"'] &                        
\end{tikzcd}
\end{equation}
\end{center}

The only square that does not obviously commute is the top left one. But the corresponding maps of spaces commutes up to homotopy since evaluation at any two points give homotopic maps as $S^n$ is connected.

\begin{lemma}\label{relation124}
The class $a\otimes b \in H^*(Z^2) $ maps to $\sum_0^nt^i r^*(w_i\delta(ab))$ in $H^*(Z^{S^n}_{h\mathrm{O}(n)})$ in (\ref{deltadiagram}), where $r: Z^{S^n}_{h\mathrm{O}(n)} \to Z^{S^n}_{h\SOr(n+1)}$ is the quotient map, $r^*(w_1)=0, r^*(w_0)=1$. This implies $w_{n+1}\delta(a)=0$. 
\begin{proof}
Moving the class $a\otimes b$ through $\Delta^*$ and the top row of the diagram gives $\delta(ab)$. $H^*(Z^{S^n}_{h\mathrm{O}(n)})$ is a free module over $H^*(Z^{S^n}_{h\SOr(n+1)})$ with basis $t^i$, $0 \leq i \leq n$ (see \Cref{freemodule}), and $\tau_{\mathrm{O}(n)}^{\SOr(n+1)}$ extracts the coefficient of $t^n$. Thus the $t^n$ coefficient of the image of $a\otimes b$ is $\delta(ab)$. Now in $H^*(N_{\SOr(n)}^{\Or(n)} (Z)_{h\mathrm{O}(n)})$, $\tau_{\SOr(n)}^{\mathrm{O}(n)}a\otimes b$ is killed by multiplication by $t$, so the same is true of the image of $a\otimes b$. But if $q$ is the map $Z^{S^n}_{h\mathrm{O}(n)} \to Z^{S^n}_{h\SOr(n+1)}$, then $t^{n+1}=t^{n-1}q^*w_2+t^{n-2}q^*w_3 + \dots+ q^*w_{n+1}$. Thus if $t(t^n\delta(ab)+\sum_0^{n-1}c_it^i)=0$, we must have that $c_i=\delta(ab)q^*w_{n-i}$ ($c_{n-1}=0$), and $q^*w_{n+1}\delta(ab)=0$, giving the result.

\end{proof}
\end{lemma}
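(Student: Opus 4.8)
The plan is to chase the class $a\otimes b$ around diagram (\ref{deltadiagram}) and then read off the conclusion from the module structure of $H^*(Z^{S^n}_{h\text{O}(n)})$ over $H^*(F_n(Z))$.

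I would first record what the diagram gives. Along the top row, $\Delta^*$ sends $a\otimes b$ to the cup product $ab\in H^*(Z_{h\SOr(n)})$, and $\tau^{\SOr(n+1)}_{\SOr(n)}\circ ev_0^*$ sends $ab$ to $\delta(ab)$ by the definition of $\delta$. Granting the commutativity of the diagram (only the top-left square is non-formal, and it commutes because $S^n$ is connected so evaluation at two points is homotopic to evaluation at one point), the element of $H^*(Z^{S^n}_{h\text{O}(n)})$ obtained by sending $a\otimes b$ down the left edge and across the bottom has image $\delta(ab)$ under $\tau^{\SOr(n+1)}_{\text{O}(n)}$. By Lemma \ref{freemodule}, $H^*(Z^{S^n}_{h\text{O}(n)})$ is free over $H^*(F_n(Z))$ on $1,t,\dots,t^n$ with $t=w_1$, and $\tau^{\SOr(n+1)}_{\text{O}(n)}$, being the pushforward along the fibre $\RR\PP^n$, reads off the coefficient of $t^n$. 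So, writing the image of $a\otimes b$ as $\sum_{i=0}^n q^*(c_i)\,t^i$ with $c_i\in H^*(F_n(Z))$, I get $c_n=\delta(ab)$.

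Next I would use that this same image is a transfer: it equals $\tau^{\text{O}(n)}_{\SOr(n)}(ev_1^*(a\otimes b))$ by naturality of the pushforward against the pullback square of the double cover $Z^{S^n}_{h\SOr(n)}\to Z^{S^n}_{h\text{O}(n)}$, which is classified by $t$, and a transfer along such a double cover is killed by multiplication by $t$ (the Gysin sequence of the corresponding $S^0$-bundle, whose Euler class is $t$). Thus $t\cdot\sum_{i=0}^n q^*(c_i)t^i=0$. To exploit this I also need the expansion of $t^{n+1}$ in the free basis: by Lemma \ref{freemodule}, in $H^*(Z^{S^n}_{h\text{O}(n)})$ one has $q^*(w_k)=w_k+t w_{k-1}$ for $2\le k\le n$ and $q^*(w_{n+1})=t w_n$ (here $w_1,\dots,w_n$ denote the Stiefel--Whitney classes on $\text{B}\text{O}(n)$, with $w_1=t$), and eliminating $w_2,\dots,w_n$ gives $t^{n+1}=\sum_{k=2}^{n+1}t^{n+1-k}q^*(w_k)$. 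Substituting this into $t\cdot\sum q^*(c_i)t^i=0$ and comparing coefficients of $1,t,\dots,t^n$ propagates $c_n=\delta(ab)$ to all the $c_i$ — each is a $q^*(w_j)$-multiple of $\delta(ab)$, with $c_{n-1}=0$ since $w_1=0$ in $H^*(\text{BSO}(n+1))$ — which recovers the stated expansion of the image, and the coefficient of $1$ gives the relation $q^*(w_{n+1}\delta(ab))=0$.

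Finally, since a free module is faithful, $q^*$ is injective, so $w_{n+1}\delta(ab)=0$ in $H^*(F_n(Z))$; taking $b=1$ gives $w_{n+1}\delta(a)=0$ for all $a$. The main obstacle is organizational rather than conceptual: one must correctly identify the image of $a\otimes b$ as a transfer so that the $t$-torsion applies, and derive the relation $t^{n+1}=\sum t^{n+1-k}q^*(w_k)$ with the right edge conventions ($w_0=1$, $w_1=0$, $w_{n+1}=w_1w_n$ on $\text{B}\text{O}(n)$) — it is precisely the $q^*(w_{n+1})$ term, which cannot be absorbed into the basis $1,\dots,t^n$, that produces the vanishing $w_{n+1}\delta=0$.
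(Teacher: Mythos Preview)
Your proposal is correct and follows essentially the same route as the paper: chase $a\otimes b$ around diagram (\ref{deltadiagram}) to identify the $t^n$-coefficient as $\delta(ab)$, use that the image is a transfer along the double cover so is annihilated by $t$, and then expand $t^{n+1}$ in the free basis $1,t,\dots,t^n$ to solve for the remaining coefficients and extract $w_{n+1}\delta(ab)=0$. You add two small clarifications the paper leaves implicit---why transfers along the $C_2$-cover are $t$-torsion (Gysin) and why $q^*(w_{n+1}\delta(ab))=0$ lifts to $w_{n+1}\delta(ab)=0$ (injectivity of $q^*$ from freeness)---but the argument is otherwise identical.
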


Next, we look at the Steenrod action on $\delta(a),\phi_i(a)$. There is only one step in the construction of $\delta(a),\phi_i(a)$ that does not necessarily commute with the Steenrod action, namely the pushforward map. Nevertheless, the interaction of the pushforward with the Steenrod squares is understandable via the Stiefel--Whitney classes of the normal bundle of the fibration.

\begin{lemma}\label{pushforwardsquares}
Let $f:X \to Y$ be a fibration whose fibers are manifolds of dimension $n$, let $\nu$ denote the normal bundle of $f$, $w_\nu$ its total Stiefel-Whitney class, and $\tau_f$ the pushforward map of $f$. Then $Sq(\tau_f(a)) = w_\nu Sq(a)$.
\begin{proof}
Recall that the pushforward is the composite of the Thom isomorphism of $\nu$ with the the Gysin map $H^*(X^{\nu}) \to H^*(Y)$ in cohomology. Since the latter map comes from a map of spectra, it commutes with Steenrod operations, and the former is given by multiplication by the Thom class $u$. $Squ=w_\nu u$ essentially by definition of the $w_i$ \cite{milnorstasheff}, so $Sq(ua) = SquSqa=w_\nu uSqa$. Then composing with the Gysin map gives the result.
\end{proof}
\end{lemma}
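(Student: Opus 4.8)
The plan is to unwind the definition of the pushforward $\tau_f$ given in the Pushforward subsection and to reduce the whole statement to the single identity $Sq(u) = w_\nu\cdot u$ for the Thom class of $\nu$. Recall that $\tau_f$ factors as the Thom isomorphism $H^*(A) \xrightarrow{\;\sim\;} H^{*-n}(A^{\nu})$, $a \mapsto u\cdot a$, where $u$ is the Thom class and the product is taken via the $H^*(A)$-module structure on $H^*(A^{\nu})$, followed by the map $c^*\colon H^{*-n}(A^{\nu}) \to H^{*-n}(\Sigma^{\infty}_+B) = H^{*-n}(B)$ induced by the Pontryagin--Thom collapse map of spectra $c\colon \Sigma^{\infty}_+B \to A^{\nu}$ (see \cite{pushforwardref}).

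First I would observe that $c$ is a genuine map of spectra, so $c^*$ is a map of modules over the Steenrod algebra; in particular it commutes with the total square $Sq = \sum_i Sq^i$. Hence it suffices to compute $Sq$ on the Thom isomorphism, i.e.\ to compute $Sq(u\cdot a)$ in $H^*(A^{\nu})$. The module structure of $H^*(A^{\nu})$ over $H^*(A)$ is induced by a diagonal map $A^{\nu} \to A^{\nu}\wedge A_+$ of spectra, so the Cartan formula applies and gives $Sq(u\cdot a) = Sq(u)\cdot Sq(a)$. Thus the content of the lemma is entirely carried by the identity $Sq(u) = w_\nu\cdot u$, after which $Sq(u\cdot a) = w_\nu u\cdot Sq(a) = u\cdot(w_\nu\, Sq(a))$, and applying $c^*$ yields $Sq(\tau_f(a)) = \tau_f(w_\nu\, Sq(a))$, which is the claimed formula.

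The identity $Sq(u) = w_\nu u$ is classical when $\nu$ is a genuine vector bundle: it is literally the defining property of the Stiefel--Whitney classes in terms of the Steenrod squares on the Thom class \cite{milnorstasheff}. The one point requiring care -- and the main technical obstacle -- is that here $\nu$ is a virtual bundle of dimension $-n$, so $A^{\nu}$ is a spectrum rather than a space. I would handle this by naturality and stabilization: pick a genuine vector bundle $E$ on $A$ together with an isomorphism $\nu\oplus\mathbb{R}^m \cong E$, so that $A^{\nu}\simeq\Sigma^{-m}A^{E}$; under this equivalence the Thom class of $\nu$ is the $m$-fold desuspension of the Thom class of $E$, and $w_\nu = w(E)w(\mathbb{R}^m)^{-1} = w(E)$. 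Since $Sq$ is stable it commutes with the desuspension, so the genuine-bundle identity $Sq(u_E) = w(E)\,u_E$ transports to $Sq(u) = w_\nu u$. The remaining ingredients -- that the Thom isomorphism, its compatibility with the $H^*(A)$-module structure, and the Cartan formula all hold verbatim for Thom spectra of virtual bundles -- are standard facts about the mod $2$ (co)homology of Thom spectra, and I would cite them rather than reprove them.
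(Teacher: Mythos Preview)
Your proof is correct and follows essentially the same approach as the paper: factor $\tau_f$ as the Thom isomorphism followed by the Gysin/collapse map, use that the latter is induced by a map of spectra and hence commutes with $Sq$, and reduce everything to the identity $Sq(u)=w_\nu u$ via the Cartan formula. The only difference is that you spell out the stabilization argument justifying $Sq(u)=w_\nu u$ for virtual bundles, whereas the paper simply cites \cite{milnorstasheff} and leaves that point implicit.
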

\begin{lemma}
\label{relation78}
The relations
    \[Sq\phi_{n-k}(a)=\sum_{j\geq0} \sum_{i}\binom{k+|a|-j}{i-2j}\phi_{n-i-k+2j}(Sq^i a)+\delta_{k0}\sum_{2j<i}\delta(Sq^j a\times Sq^{i-j} a)\]
    \[Sq^i\delta(a)=\delta(Sq^i a)\] hold.

\begin{proof}
For $\delta(a)$, by Lemma \ref{pushforwardsquares}, we have that $\Sq\delta(a) = w_\nu\delta(\Sq(a))$. However, $w_\nu$ is easy to compute: $\nu$ is the pullback of the normal bundle of the fibration $\BSOr(n) \to \BSOr(n+1)$. But $-\nu$ is the bundle that gives the tangent of the fiber, which is the tautological bundle with Stiefel--Whitney classes $w_i$. Thus $w_\nu = (\sum w_i)^{-1}$.

For $\phi_i(a)$, because we started with the class $t^iP(a)$, it suffices to understand the action of the Steenrod squares on that. This is given by the Nishida relations (see \cite{nishidarelations}):
$$\Sq(t^kP(a)) = \sum_{i>0}\left(\sum_{j\geq0} \binom{k+|a|-j}{i-2j}t^{i+k-2j}P(\Sq^j a) + \delta_{k0}\sum_{2j<i}\tau_{*}^{C_2}(Sq^j a\otimes Sq^{i-j} a)\right)$$
which gives the desired result. 
\end{proof}
\end{lemma}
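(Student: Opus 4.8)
The plan is to track how $Sq$ interacts with the two constructions in Construction \ref{deltaphidefn}. Both $\delta(a)$ and $\phi_i(a)$ are built by pulling a distinguished class back along equivariant evaluation maps (and along $\pi$), which commutes with $Sq$ on the nose, and then applying a single pushforward $\tau$; so the only subtlety is how $\tau$ interacts with $Sq$, and this is exactly the content of Lemma \ref{pushforwardsquares}: $Sq(\tau_f(x)) = \tau_f(w_\nu\,Sq(x))$ for $\nu$ the (virtual) normal bundle of $f$. Thus both relations reduce to (i) identifying the relevant normal bundle and (ii) computing $Sq$ of the distinguished starting class.

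For $\delta$: the fibration is $Z^{S^n}_{h\SOr(n)} \to F_n(Z)$ with fibre $S^n = \SOr(n+1)/\SOr(n)$. I would identify this as the sphere bundle $S(V)$ of the rank-$(n+1)$ bundle $V$ pulled back from $\gamma_{n+1}$ on $\text{BSO}(n+1)$; then $-\nu$ is the fibrewise tangent bundle, which is the complement $L^{\perp}$ of the tautological line $L \subset \pi^* V$, and $L$ is trivial (it has the tautological section $v \mapsto v$), so $w_{-\nu} = w(\pi^* V) = 1 + w_2 + \dots + w_n$, the top class $w_{n+1}$ vanishing on $S(V)$ by the Gysin sequence. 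Since the $w_i$ are pulled back from the base, Lemma \ref{pushforwardsquares} and the projection formula give $Sq(\delta(a)) = (1 + w_2 + \dots + w_n)^{-1}\delta(Sq\,a)$ in $H^*(F_n(Z))$; the $w_i$ raise the skeletal filtration of $\text{BSO}(n+1)$, so the correction is invisible in the associated graded and one obtains $Sq^i\overline{\delta}(a) = \overline{\delta}(Sq^i a)$.

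For $\phi_i$: here $\phi_{n-k}(a)$ is the image of $t^k P(a) \in H^*(Z^2_{hC_2})$ under $\pi^*$, $ev_1^*$ and $\tau_{\text{O}(n)}^{\SOr(n+1)}$. I would compute $Sq(t^k P(a))$ by the Nishida relations \cite{nishidarelations}, getting $\sum_{i>0}\big(\sum_{j\ge0}\binom{k+|a|-j}{i-2j}t^{i+k-2j}P(Sq^j a) + \delta_{k0}\sum_{2j<i}\tau_*^{C_2}(Sq^j a\otimes Sq^{i-j}a)\big)$, and then push this through the composite: the terms $t^{i+k-2j}P(Sq^j a)$ become $\phi$-classes via the identification of the image of $P(-)$ established in the proof of Proposition \ref{propF} (using the module relations $t^{n+1} = t^{n-1}w_2 + \dots$, which lie in higher filtration), and the transfer term $\tau_*^{C_2}(Sq^j a\otimes Sq^{i-j}a)$ maps to $\delta(Sq^j a \times Sq^{i-j}a)$ by the same evaluation-and-pushforward that defines $\delta$ of an external product. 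As for $\delta$, one must also account for the Stiefel--Whitney class of the normal bundle of the final pushforward along the $\RR\PP^n$-bundle; using that $n$ is odd, its correction is $1$ modulo higher filtration, so the identity holds in the associated graded as relation (7).

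The step I expect to be the main obstacle is precisely this last one for $\phi_i$: carefully matching the Nishida output with the stated formula after reindexing, keeping track of which powers of $t$ survive the pushforward $\tau$ (versus which are pushed into higher filtration by the module relation $t^{n+1} = \sum t^{n-i}w_{i+1}$), and confirming that the normal-bundle correction for the $\RR\PP^n$-pushforward contributes nothing in the associated graded — this is the place where the statement is genuinely about $\mathrm{gr}\,H^*(F_n(Z))$ rather than $H^*(F_n(Z))$ itself. Identifying the transfer term with $\delta$ of an external product and computing the $S^n$-normal bundle for $\delta$ are comparatively routine.
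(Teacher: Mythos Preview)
Your approach is essentially the paper's: both arguments reduce to Lemma \ref{pushforwardsquares} for the normal-bundle correction on the pushforward, and to the Nishida relations for computing $Sq(t^kP(a))$. The paper is terser---it neither explicitly tracks the $\RR\PP^n$-normal-bundle correction in the $\phi$ case nor the passage to the associated graded, and it leaves the identification of the transfer term $\tau_*^{C_2}(Sq^j a\otimes Sq^{i-j}a)$ with a $\delta$-class implicit---so your added care on these points is warranted, but the underlying strategy is identical.
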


We are now ready to prove \Cref{extrarelations}.
\begin{proof}
Relations $(9),(10)$ hold for any object of $\cK$. Relation $(4)$ was proven in Lemma \ref{relation124} and Relations $(7),(8)$ were proven in Lemma \ref{relation78}.

We now prove the remaining relations, using the notation of Section \ref{section5}. First observe that in $H^*(N_{\SOr(n)}^{\Or(n)} (Z)_{h\mathrm{O}(n)})$, $\tau_{\SOr(n)}^{\mathrm{O}(n)}(a\otimes b) = P(a+b)-P(a)-P(b)$. Thus multiplying by $t^i$ and looking at the image in $H^*(Z^{S^n}_{h\SOr(n+1)})$, we can use the definition of the $\phi_i$ to get relation $(1)$. Since $ev^*_0$ and $\tau_{\SOr(n)}^{\SOr(n+1)}$ are additive, relation $(2)$ holds. To see relation $(3)$, note that the image of $P(a)$ in $Z^{S^n}_{h\mathrm{O}(n)}$ is $\sum_i^n t^i \phi_i(a)$. Since $P(ab) = P(a)P(b)$, and $t^{n+1}=t^{n-1}q^*w_2+t^{n-2}q^*w_3 + \dots+ q^*w_{n+1}$, isolating the $t^k$ coefficient gives 
$$\phi_k(ab) = \sum_{i+j=k}\phi_i(a)\phi_j(b) + \sum_{\ell = n+1}^{2n} \sum_{i+j=\ell} \phi_i(a)\phi_j(b)\left( \sum_{\substack{2 \leq \alpha_1 \ldots \alpha_m \leq n+1\\ \alpha_1 + \ldots \alpha_m = \ell-k\\ \alpha_m>n-k} }\prod_{f=1}^m w_{\alpha_f}\right).$$
\end{proof}

\section{A presentation of the zeroth derived functor}\label{algebraicstuffsection}

Having computed $H^*(Z^{S^n}_{h\SOr(n+1)})$ for $Z\in \GEM$ in terms of an explicit cdga, we turn to giving an explicit presentation of the cohomology of that cdga. We do this by writing an explicit functor $\overline{\ell}_n$, and proving that $\overline{\ell}_n$ is the zeroth derived functor of the associated graded algebra of $H^*(Z^{S^n}_{h\SOr(n+1)})$. Our approach is largely inspired by and analogous to \cite[Section 5, Section 8]{Bkstedt1999HomotopyOO}.

Our first goal will be to define the functor $\overline{\ell}_n$. Because of the fact that the differentials in \Cref{thm:mainreplacethis} are compatible with the Steenrod operations acting on the columns of the spectral sequence, $\overline{\ell}_n$ naturally lands in a slightly more refined category than bigraded commutative $\FF_2$-algebras. We now define this more refined target category, $\cK^{\gr }$.

\begin{definition}
    We let $\cK^{\gr }$ denote the category of commutative algebras in the category $\cU^{\NN}$ (with respect to the Day convolution\footnote{This means that $(X\otimes Y)_i= \oplus_{j+k=i}X_j\otimes Y_k$}), such that the degree $0$ piece is in $\cK$. We refer to the degree coming from the grading of objects in $\cK$ and $\cU$ as the \textit{Steenrod degree}, to distinguish it from degree coming from the grading.
\end{definition}

We now define the functor whose underlying bigraded algebra we claim to be the zeroth derived functor of $\gr H^*(Z^{S^n}_{h\SOr(n+1)})$.
\begin{definition}\label{lbardefn}
Let $A \in \cK$. $\overline{\ell}_n(A)$ is the graded algebra over the Steenrod algebra multiplicatively generated as an object in $\cK^{\gr }$ by classes $\overline{\phi}_i(a),\overline{\delta}(a)$ for $a \in A$ in graded degree $0$, and $\overline{w_i}$ in graded degree $i$, with Steenrod degree $2|a|-i, |a|-n, $and $0$ respectively, with the following relations, where $\delta_{i,j}$ denotes the Kronecker delta function:
\begin{enumerate}
\item $\overline{\phi}_i(a+b) = \overline{\phi}_i(a) + \overline{\phi}_i(b)$
    \item $\overline{\delta}(a+b) = \overline{\delta}(a)+\overline{\delta}(b)$
    \item $\overline{\delta}(ab)\overline{\delta}(c)+\overline{\delta}(bc)\overline{\delta}(a)+\overline{\delta}(ca)\overline{\delta}(b)=0$
    \item $\overline{w}_{n+1}\overline{\delta}(a)=0$
    \item $\overline{\delta}(a)\overline{\phi}_i(b) = \overline{\delta}(aSq_ib)+{\delta}_{0n}\overline{\delta}(ab)\overline{\delta}(b)$
    \item $\overline{\phi}_k(ab) = \sum_{i+j=k}\overline{\phi}_i(a)\overline{\phi}_j(b)$
    \item $Sq\overline{\phi}_{n-k}(a)=\sum_{j\geq0} \sum_{i}\binom{k+|a|-j}{i-2j}\overline{\phi}_{n-i-k+2j}(Sq^i a)+{\delta}_{k0}\sum_{2j<i}\overline{\delta}(Sq^j a\times Sq^{i-j} a)$
    \item $Sq^i\overline{\delta}(a)=\overline{\delta}(Sq^i a)$
    \item $Sq_i(\overline{\delta}(a)) = Sq_i(\overline{\phi}_j(a)) = 0$ for $i <0$
    \item $Sq_0(\overline{\delta}(a))= \overline{\delta}(a)^2$, $Sq_0(\overline{\phi}_j(a)) = \overline{\phi}_j(a)^2$
    \item $\overline{\phi}_k(Sq_i a)  = \sum_{j=0}^{(2|a|-i-k)/2}\binom{|a|-i-j-1}{2|a|-i-k-2j}\overline{\phi}_{2j-2|a|+2i+k}(Sq^j a) + {\delta}_{k,n}\overline{\phi}_{i}(a)\overline{\delta}(Sq_i a)$ for $k>i$
    \item $\overline{\phi}_0(Sq_j a) = \overline{\phi}_j(a)^2 +{\delta}_{j,n}\overline{\delta}(a^2 Sq_n a)$
\end{enumerate}
where we declare $\overline{\phi}_m(a) = 0$ for $m>n$.
\end{definition}

Now, we can construct a natural transformation $\overline{\eta}_n$ on GEMs that we will show witnesses $\overline{\ell}_n$ as the zeroth derived functor of $H^*(Z^{S^n}_{h\SOr(n+1)})$.

\begin{proposition}\label{constructeta}
Let $Z\in h\cS^{\ft}$. For $a \in H^*(Z)$, the map sending $\overline{w}_i,\overline{\delta}(a),\overline{\phi}_i(a)$ to the image of $w_i,\delta(a),\phi_i(a)$ in the associated graded, with $w_i$ in grading $i$ and $\delta(a), \phi_i(a)$ in grading $0$, defines a natural map of bigraded $\FF_2$-algebras $$\overline{\eta}_n:\overline{\ell}_n(H^*(Z)) \to \gr H^*(Z^{S^n}_{h\SOr(n+1)})$$
This map is an isomorphism for $Z \in \GEM^{\fin}$ iff it witnesses $\overline{\ell}_n$ as the zeroth derived functor of $\gr H^*(Z^{S^n}_{h\SOr(n+1)})$.
\end{proposition}
\begin{proof}
$\overline{\ell}_n$ preserves sifted colimits since it is defined from the input in terms of generators and relations. Sifted colimits are preserved by the forgetful functor from $\cK^{\gr}$ to bigraded $\FF_2$-algebras, so if the formula in the proposition defines a natural transformation for $Z \in \GEM^{\fin}$, then it left Kan extends to a map to the zeroth derived functor of $\gr H^*(Z^{S^n}_{h\SOr(n+1)})$, which naturally maps to $\gr H^*(Z^{S^n}_{h\SOr(n+1)})$ by \Cref{lemma:nattransfinitetype}, giving the map in general. We also see that $\overline{\eta}_n$ being an isomorphism on $Z \in \GEM^{\fin}$ is equivalent to it witnessing $\overline{\ell}_n$ as the zeroth derived functor of $\gr H^*(Z^{S^n}_{h\SOr(n+1)})$.

Thus to finish the proof of the proposition, because $\overline{\ell}_n$ is presented by generators and relations, it suffices to check that all of the generating relations among $\overline{w}_i,\overline{\delta}(a),\overline{\phi}_i(a)$ are satisfied in the declared images for $Z \in \GEM^{\fin}$. We check the relations in the same order that the relations are listed in \Cref{lbardefn}.
\begin{enumerate}
    \item This is linearity of $Sq_i$.
    \item This is linearity of $d$.
    \item This follows from the Leibniz rule for $d$.
    \item This follows since the $d_{n+1}$ differential applied to $a$ is $w_{n+1}da$.
    \item We can calculate $d(a)Sq_i(b) = d(aSq_ib)$ for $i<n$ and $d(a)(Sq_n(b)-bdb) = d(ab)db$.
    \item This is essentially the Cartan formula.
    \item Lemma \ref{relation78}.
    \item Lemma \ref{relation78}, or alternatively the fact that $Sq_i$ commutes with $d$.
    \item This is the instability condition.
    \item This is the compatibility of $Sq_0$ with the algebra structure. 
    \item This is the Adem relations.
    \item This follows from the fact that $Sq_0(Sq_ia) = (Sq_ia)^2$.
\end{enumerate}
\end{proof}

The goal of this section is to prove \Cref{thmc} below, which by \Cref{constructeta} (and Postnikov completeness) is equivalent to checking $\overline{\eta}_n$ is an isomorphism for $Z \in \GEM^{\fin}$. Note that this implies \Cref{thmcintro} of the introduction.

\begin{theorem}\label{thmc}
The natural transformation $\overline{\eta}_n$ realizes $\overline{\ell}_n$ as the zeroth derived functor of $Z \mapsto grH^*(Z^{S^n}_{h\SOr(n+1)})$, and is an isomorphism for $Z \in \GEM$.
\end{theorem}

The work done here is completely algebraic. To make the identification, we break up both sides into manageable pieces, and identify the pieces.

First we define functors $\overline{E}_{n+1},\overline{E}_{\infty}$ that describe algebraically the $E_{n+1}$-page and $E_{\infty}$-page of the homotopy fixed point spectral sequence for $Z \in \GEM$.

\begin{definition} Given $A \in \cK$, $\overline{E}_{n+1}(A)$ is defined as the bigraded algebra given by $L_n(A)\otimes H^*(\BSOr(n+1))$ where the first tensor factor is in bidegrees $(0,*)$ and the second is in bidegree $(*,0)$. The Steenrod square $Sq^i$ acts in the $(0,*)$ direction, so that $\overline{E}_{n+1}$ can be viewed as an object of $\cK^{gr}$. $\overline{E}_{n+1}(A)$ is additionally equipped with a differential $d$ degree $(-n,n+1)$ that is a derivation, determined by the formula $d(a) = da\otimes w_{n+1}$ and $d(da) = 0$ , where $a, da$ are the generators of $L_n(A)$.
\end{definition}

\begin{definition}
The functor $\overline{E}_{\infty}:\cK \to \cK^{gr}$ defined to be the cohomology of $\overline{E}_{n+1}(A)$ with respect to the differential $d$.
\end{definition}

$\overline{E}_{\infty}$ of course depends on $n$, but we suppress the dependence in the notation.

\begin{lemma}
When $Z \in \GEM$, $\overline{E}_{\infty}$ can be identified with the $E_{\infty}$ term of the homotopy fixed point spectral sequence.
\begin{proof}
Recall that for $Z \in \GEM$, the $E_{n+1}$ page of the spectral sequence is the tensor product of $H^*(\BSOr(n+1))$ and $H^*(Z^{S^n})=L_n(H^*(Z))$ by Proposition \ref{formalnonequivariantisomorphism}, and by Theorem \ref{mainspecseq} the result immediately follows.
\end{proof}
\end{lemma}

\begin{definition}\label{secondoverlineeta}
Define the natural transformation $\overline{\eta}_n:\overline{\ell}_n \to \overline{E}_{\infty}$ which sends $\overline{\delta}(a)\mapsto da$, $\overline{\phi}_i(a) \mapsto Sq_ia$ for $i<n$, and $\overline{\phi}_n(a) \mapsto Sq_na + ada$. These images in $L_n$ clearly are in the kernel of $d$ and hence in $\overline{E}_{\infty}$, and all the relations of $\overline{\ell}_n$ are satisfied.
\end{definition}

\begin{remark}
Definition \ref{secondoverlineeta} is compatible with our previous definition of $\overline{\eta}_n$ when $A=H^*(Z)$ for $Z \in \GEM$.
\end{remark}

In order to study $\overline{\ell}_n$, we break it up into smaller pieces which contain essentially all of its information.

\begin{definition}
$\ell_n'$ is defined to be the quotient of $\overline{\ell}_n$ as a ring by $\overline{w}_i,i\leq n+1$. $\ell_n'/\delta$ is defined to be the quotient of $\ell_n'$ by $\overline{\delta}(a)$ for all $a$.
\end{definition}

Note that $\ell_n'$ and $\ell_n'/\delta$ live entirely in grading zero, so can be viewed as objects in $\cK$.

\begin{lemma}
$\ell_n'$ can be presented in $\cK$ using the generators $\delta(a),\phi_i(a)$, and the relations (1)-(3) and (5)-(12) of \Cref{lbardefn}. Furthermore, $\ell_n'/\delta(A)$ is generated by $\overline{\phi}_i(a)$ for $a \in A$ modulo the following relations:
 
\begin{enumerate}
\item $\overline{\phi}_i(a+b) = \overline{\phi}_i(a) + \overline{\phi}_i(b)$
    \item $\overline{\phi}_k(ab) = \sum_{i+j=k}\overline{\phi}_i(a)\overline{\phi}_j(b)$
    \item $Sq^i\overline{\phi}_{n-k}(a)= \sum_{j}\binom{k+|a|-j}{i-2j}\overline{\phi}_{n-i-k+2j}(Sq^j a)$
    \item $ Sq_i(\overline{\phi}_j(a)) = 0$ for $i <0$
    \item $Sq_0(\overline{\phi}_j(a)) = \overline{\phi}_j(a)^2$.
    \item $\overline{\phi}_k(Sq_i a)  = \sum_{j=0}^{(2|a|-i-k)/2}\binom{|a|-i-j-1}{2|a|-i-k-2j}\overline{\phi}_{2j-2|a|+2i+k}(Sq^j a)$ for $k>i$.
    \item $\overline{\phi}_0(Sq_j a) = \overline{\phi}_j(a)^2$
\end{enumerate}
with $\overline{\phi}_m(a) = 0$ for $m>n$.
\begin{proof}
The first statement is clear since the $\overline{w}_i$, $i\leq n$ are not involved in the relations, and the only relation on $\overline{w}_{n+1}$ is $\overline{w}_{n+1}\overline{\delta}(a)=0$.

The second statement is also clear since $\ell_n'/\overline{\delta}(A)$ is obtained from $\ell_n'(A)$ by adding more relations, namely killing the $\overline{\delta}$ classes.
\end{proof}
\end{lemma}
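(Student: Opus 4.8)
The plan is to prove both statements purely by manipulating presentations, using the standard fact that if an object of an algebraic category (here $\cK^{\text{gr}}$, or $\cK$) is presented by generators $G$ and relations $\mathcal{R}$, then its quotient by the ideal generated by a subset $G_0\subseteq G$ of the generators is presented by $G\setminus G_0$ together with the relations obtained from $\mathcal{R}$ by substituting $0$ for each element of $G_0$; in the unstable setting one also checks that the Steenrod consequences of the identities ``$g=0$, $g\in G_0$'' are already among the substituted relations, so nothing further is imposed. This applies to $\overline{\ell}_n(A)$ because, by Definition \ref{lbardefn}, it is by construction the quotient of the free commutative algebra in $\cK^{\text{gr}}$ on $\overline{\phi}_i(a),\overline{\delta}(a),\overline{w}_i$ by relations (1)--(12).

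For the first statement, $\ell_n'$ is by definition $\overline{\ell}_n$ modulo the ideal generated by $\overline{w}_1,\dots,\overline{w}_{n+1}$, so I would set every $\overline{w}_i$ to $0$ in Definition \ref{lbardefn}: relations (1), (2), (3), (5)--(12) contain no $\overline{w}_i$ and survive unchanged, while relation (4), $\overline{w}_{n+1}\overline{\delta}(a)=0$, becomes $0=0$ and is discarded. To regard the result as an object of $\cK$ rather than $\cK^{\text{gr}}$ I would note, as already observed before the statement, that the remaining generators $\overline{\phi}_i(a),\overline{\delta}(a)$ all sit in graded degree $0$ and that both the product and the Steenrod action preserve the $\cK^{\text{gr}}$-grading, so $\ell_n'$ is concentrated in grading $0$.

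For the second statement, $\ell_n'/\delta$ is $\ell_n'$ modulo the ideal generated by all the $\overline{\delta}(a)$, so I would repeat the substitution, now sending each generator $\overline{\delta}(a)$ — but not the Kronecker symbols $\overline{\delta}_{0n},\overline{\delta}_{k0},\overline{\delta}_{k,n},\overline{\delta}_{j,n}$ — to $0$ in the relations of $\ell_n'$, and record the effect relation by relation. Relations (1) and (6) survive verbatim and become list relations (1) and (2); relations (2), (3), (5) and (8) collapse to $0=0$, being built entirely out of the $\overline{\delta}(-)$'s; relation (7), after expanding the total Steenrod square into homogeneous pieces and deleting its $\overline{\delta}(-)$-summand, becomes list relation (3), and relations (11) and (12) become list relations (6) and (7) after deleting their $\overline{\delta}(-)$-summands; and in each of (9) and (10) the $\overline{\delta}$-clause trivializes while the $\overline{\phi}$-clause gives list relation (4), resp. (5). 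Together with the standing convention $\overline{\phi}_m(a)=0$ for $m>n$, this is exactly the stated presentation.

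I do not expect any real obstacle: the argument is pure bookkeeping, and the only points needing attention are distinguishing the factors $\overline{\delta}(-)$ (which vanish) from the Kronecker deltas (which do not), checking that the binomial-coefficient sums and index ranges in the surviving relations match the list relations verbatim, and verifying — as above — that $\ell_n'$ and $\ell_n'/\delta$ are concentrated in grading $0$ so that they are genuinely objects of $\cK$. One could alternatively verify the universal property of each presented algebra by hand, but the substitution argument is cleaner and suffices.
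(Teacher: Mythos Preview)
Your proposal is correct and follows the same approach as the paper: both argue that quotienting by generators amounts to setting them to zero in the presentation, so that $\ell_n'$ is obtained by dropping the $\overline{w}_i$ and relation (4), and $\ell_n'/\delta$ by further killing the $\overline{\delta}(a)$'s. The paper's proof simply declares both statements ``clear'' for these reasons, while you have spelled out the relation-by-relation bookkeeping in more detail; there is no substantive difference.
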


We break up the functor $\overline{E}_{\infty}$ into smaller pieces.

\begin{definition}
${E}'_{\infty}(A)$ is the quotient of $\overline{E}_{\infty}(A)$ by the $w_i$. ${E}'_{\infty}/d(A)$ is the quotient of ${E}'_{\infty}(A)$ by elements of the form $da$.
\end{definition}

The natural map $\overline{\ell}_n \to \overline{E}_{\infty} \to {E}'_{\infty}$ sends the $w_i$ to $0$, so factors as a map $\eta_n':\ell_n' \to {E}'_{\infty}$. Further quotienting the map to $E'_{\infty}/d$ factors makes the map factor through $\ell_n'/\delta$, giving a map we can call $\eta_n'/d$.

This implies $E'_{\infty}(A),E'_{\infty}(A)/d$ can also be described as follows: Consider $L_n(A)$ as a differential graded algebra, where the differential is $a \mapsto da$. Then $E'_{\infty}(A)$ is the kernel of $d$ and $E'_{\infty}(A)/\delta$ is the cohomology of $d$.

Finally, we need an algebraic version of the pushforward map $\tau_{\SOr(n)}^{\SOr(n+1)}$, and a ring map from $\ell_n'(A)$ into a subalgebra of $L_n(A)$ that is an algebraic version of the quotient map in cohomology.

\begin{definition}
The map $\tau:L_n(A) \to \ell_n'(A)$ is the $\FF_2$-linear map that sends $adb_1\dots db_n$ to $\delta (a)\delta (b_1) \dots \delta (b_n)$. The map $i:\ell_n'(A) \to L_n(A)$ is the ring map sending $\delta(a)$ to $da$, $\phi_i(a)$ to $Sq_ia$ for $i < n$ and $\phi_n(a)$ to $Sq_na + ada$.
\end{definition}

\begin{proposition}\label{algebraicpushforward}
The maps $i,\tau$ are well-defined, and the differential $d$ on $L_n$ factors as $i \circ \eta_n'\circ \tau$. Moreover, $\tau \circ \eta_n' = 0$ and $\tau$ fits in an exact sequence $L_n(A) \xrightarrow{\tau} \ell_n'(A) \to \ell_n'/\delta(A) \to 0$.

\begin{proof}
To check $\tau$ is well defined, it suffices to check the ideal generated by relations $(1),(2),(3),(4)$ from Proposition \ref{omegapresentation} is sent to $0$. Relation $(1)$ is easy and relation $(3),(4)$ follows from the fact that $\ell_n' \in \cK$. To check $(2)$, if we have an element of the form $adb_1\dots db_n(d(xy)-d(x)y+d(y)x)$, it is sent to $\delta(b_1)\dots\delta(b_n)(\delta(ay)\delta(x)+\delta(xy)\delta(a)+\delta(ax)\delta(y)) = 0$.

To check $i$ is well defined, we must check that the relations for $\ell_n'$ hold in $L_n(A)$. To see this, by naturality, it suffices to prove the relations hold in the universal cases. But the universal cases are GEM spaces, where we have an identification of $L_n(H^*(Z))$ with $H^*(Z^{S^n})$ which agrees with our defined map.

\end{proof}
\end{proposition}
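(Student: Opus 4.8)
The plan is to handle the four assertions in turn, using the explicit presentation of $L_n(A)$ in Proposition~\ref{omegapresentation}, the presentation of $\ell_n'$, and naturality to reduce ring-map statements to the case of $\FF_2$-EM spaces. For well-definedness of $\tau$: by Proposition~\ref{omegapresentation}, $L_n(A)$ is the free commutative $A$-algebra on symbols $da$ modulo the ideal generated by relations $(1)$--$(4)$ there, and is spanned by monomials $a\,db_1\cdots db_k$. I would define $\tau$ on the free algebra by $a\,db_1\cdots db_k\mapsto\overline{\delta}(a)\overline{\delta}(b_1)\cdots\overline{\delta}(b_k)$ and check it annihilates each generating relation times an arbitrary monomial $c_0\,dc_1\cdots dc_j$: relation $(1)$ vanishes by additivity of $\overline{\delta}$; relation $(2)$, the Leibniz rule, becomes exactly the cyclic relation $(3)$ of $\overline{\ell}_n$, $\overline{\delta}(ab)\overline{\delta}(c_0)+\overline{\delta}(bc_0)\overline{\delta}(a)+\overline{\delta}(c_0a)\overline{\delta}(b)=0$; and relations $(3)$, $(4)$ vanish because $\ell_n'\in\cK$ forces $\overline{\delta}(a)^2=Sq_0\overline{\delta}(a)=\overline{\delta}(Sq_na)$ and $\overline{\delta}(Sq_ia)=Sq^{\,|a|-i}\overline{\delta}(a)=0$ for $i<n$ (the exponent exceeding the degree of $\overline{\delta}(a)$). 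The subtle point is that the two sides of relation $(3)$ have different $d$-lengths, and it is exactly the square identity $\overline{\delta}(a)^2=\overline{\delta}(Sq_na)$ that makes $\tau$ insensitive to this.

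For well-definedness of $i$: since $i$ is a ring map, it suffices that the relations defining $\ell_n'$ hold among the elements $da$, $Sq_ia$ $(i<n)$, $Sq_na+a\,da$ of $L_n(A)$. These relations are natural in $A$ and each involves only finitely many elements of $A$, so through a map $H^*(Z)\to A$ with $Z\in EM_{\FF_2}$ it is enough to treat $A=H^*(Z)$ for $Z$ an $\FF_2$-EM space. There $\theta_n$ identifies $L_n(A)$ with $H^*(Z^{S^n})$ (Proposition~\ref{formalnonequivariantisomorphism}), and by Proposition~\ref{propF} the three families are $q^*\delta(a)$, $q^*\phi_i(a)$, $q^*\phi_n(a)$; since $q^*$ is multiplicative and the grading-zero relations of $\overline{\ell}_n$ hold in $H^*(F_n(Z))$ --- this is the content of the verifications behind Proposition~\ref{constructeta}, restricted along the vertical edge of the main spectral sequence --- their images hold in $H^*(Z^{S^n})$.

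For the factorization $d=i\circ\eta_n'\circ\tau$ and the vanishing $\tau\circ\eta_n'=0$: recall that $i$ factors as $\ell_n'\xrightarrow{\eta_n'}E'_\infty\hookrightarrow L_n$, so both statements reduce to the identity $i\circ\tau=d$ and evaluating on the spanning monomials. Since $i$ is a ring map with $i(\overline{\delta}(a))=da$, we get $i(\tau(a\,db_1\cdots db_k))=da\cdot db_1\cdots db_k=d(a\,db_1\cdots db_k)$ by the Leibniz rule and $d(db_j)=0$. For the vanishing, $\tau(da)=\overline{\delta}(1)\overline{\delta}(a)=0$ since $\overline{\delta}(1)=0$ for degree reasons, $\tau(Sq_ia)=\overline{\delta}(Sq_ia)=0$ for $i<n$, and $\tau(Sq_na+a\,da)=\overline{\delta}(Sq_na)+\overline{\delta}(a)^2=0$ by the square identity above; extending multiplicatively gives $\tau\circ\eta_n'=0$.

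For the exact sequence $L_n(A)\xrightarrow{\tau}\ell_n'(A)\to\ell_n'/\delta(A)\to0$: right exactness is immediate since $\ell_n'/\delta$ is by definition the quotient of $\ell_n'$ by the ideal $I$ generated by all $\overline{\delta}(a)$, and $\operatorname{im}\tau\subseteq I$ is clear. The content is $I\subseteq\operatorname{im}\tau$: a spanning set for $I$ consists of monomials $\overline{\delta}(a)\cdot m$ with $m$ a monomial in the generators $\overline{\delta}$, $\overline{\phi}$, and commuting the $\overline{\phi}$-factors of $m$ against $\overline{\delta}(a)$ and applying relation $(5)$ of $\overline{\ell}_n$ --- which, since $n$ is odd, reads $\overline{\delta}(x)\overline{\phi}_i(y)=\overline{\delta}(xSq_iy)$ --- absorbs all of them, leaving a pure product $\overline{\delta}(b_0)\overline{\delta}(b_1)\cdots\overline{\delta}(b_k)$, exactly the elements that $\tau$ hits. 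I expect the main obstacle to be this structural collapse of the $\overline{\delta}$-ideal onto the span of $\overline{\delta}$-products (where relation $(5)$ and the oddness of $n$ are essential), together with pinning down the well-definedness of the non-multiplicative map $\tau$ in the presence of the length-mixing relation $(da)^2=d(Sq_na)$; the remaining steps are bookkeeping or reductions to the $\FF_2$-EM case.
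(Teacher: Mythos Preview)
Your treatment of the well-definedness of $\tau$ and $i$ is exactly the paper's: check that the generating relations of $L_n(A)$ from Proposition~\ref{omegapresentation} map into relations of $\ell_n'$ (with relation~(2) becoming the cyclic relation~(3) of $\overline{\ell}_n$, and relations~(3),(4) handled by instability in $\cK$), and reduce the ring-map relations for $i$ to the universal $\FF_2$-EM case by naturality. The paper's written proof actually stops there and does not argue the factorization, the vanishing $\tau\circ\eta_n'=0$, or the exactness; your arguments for $d=i\circ\tau$ and for the exact sequence (using relation~(5) to collapse the $\overline{\delta}$-ideal onto pure $\overline{\delta}$-products, which are visibly in the image of $\tau$) are correct and fill this in.

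There is one genuine slip. For $\tau\circ\eta_n'=0$ you verify vanishing on the three generator families $da$, $Sq_ia$ $(i<n)$, and $Sq_na+a\,da$, then write ``extending multiplicatively''. But $\tau$ is \emph{not} a ring homomorphism --- for instance $\tau(a\cdot b)=\overline{\delta}(ab)$ while $\tau(a)\tau(b)=\overline{\delta}(a)\overline{\delta}(b)$ --- so vanishing on generators does not formally propagate to products in the image of $\eta_n'$. The fix uses the very device you already deploy for exactness: relation~(5) gives, for each generator $x$ of $\ell_n'$ and every $y\in L_n(A)$, the identity $\tau\bigl(i(x)\cdot y\bigr)=x\cdot\tau(y)$ (check the three cases; the case $x=\overline{\phi}_n(a)$ is where the $\delta_{in}$-term in relation~(5) is needed). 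Iterating over a monomial $x=x_1\cdots x_m$ and taking $y=1$ yields $\tau(i(x))=x\cdot\tau(1)=x\cdot\overline{\delta}(1)=0$, since $\overline{\delta}(1)$ has negative degree. So the missing step is repaired by the same key relation you have already identified.
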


\begin{lemma}\label{isomorphismreduction}
The natural map $\overline{\eta}_n:\overline{\ell}_n(A) \to \overline{E}_{\infty}(A)$ is an isomorphism iff $\eta_n'/d(A)$ is an isomorphism.
\begin{proof}
We observe from the presentation that $\overline{\ell}_n(A)$ decomposes as a direct sum 
$$\bigoplus_{\alpha_i \geq 0} w_2^{\alpha_2} \cdots w_{n}^{\alpha_{n}} \ell_n'(A)\oplus \bigoplus_{\beta_i \geq 0, \beta_{n+1}>0} w_2^{\beta_2} \ldots w_{n+1}^{\beta_{n+1}} \ell_n'/\delta(A)$$

Similarly, $\overline{E}_{\infty}$ decomposes as
$$\bigoplus_{\alpha_i \geq 0} w_2^{\alpha_2} \cdots w_{n}^{\alpha_{n}} E_{\infty}'\oplus \bigoplus_{\beta_i \geq 0, \beta_{n+1}>0} w_2^{\beta_2} \ldots w_{n+1}^{\beta_{n+1}} E_{\infty}'/d$$

Via this decomposition, the map $\overline{\ell}_n(A) \to \overline{E}_{\infty}(A)$ splits as a sum of copies of $\eta_n'/d(A)$ and $\eta_n'(A)$ accordingly. Thus it suffices to prove that $\eta_n'/d(A)$ and $\eta_n'(A)$ are isomorphisms.

We now show that $\eta_n'/d$ being an isomorphism implies $\eta_n'$ is too. $E_{\infty}'$ is generated by lifts of $E_{\infty}'/d$ as well as the elements $da$. Because $\eta_n'/d$ is surjective, and $da$ is hit by $\delta(a)$, the map is surjective. For injectivity, suppose $a$ is in the kernel of $\eta_n'$. Then because $\eta_n'/d$ is injective, we can assume $a$ is in the kernel of $d$. But by Proposition \ref{algebraicpushforward}, this means $a = \tau(b)$ where $db=0$. By surjectivity of $\eta_n'/d$ this means that $b$ is the sum of $dc$ and a class in the image of $\eta_n'$. But $\tau(dc) = 0$ and $\tau \circ \eta_n' = 0$, which shows $a = 0$.
\end{proof}
\end{lemma}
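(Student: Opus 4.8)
Here is a plan for proving Lemma \ref{isomorphismreduction}.

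The plan is to cut both sides of $\overline{\eta}_n$ into pieces indexed by the monomials in $\overline{w}_2,\dots,\overline{w}_{n+1}$, match these pieces up, and thereby reduce the statement to a single implication about $\eta_n'/d$. The key observation is that in the presentation of Definition \ref{lbardefn} the only relation connecting the polynomial generators $\overline{w}_i$ to the classes $\overline{\delta}(a),\overline{\phi}_i(a)$ is relation $(4)$, $\overline{w}_{n+1}\overline{\delta}(a)=0$. Hence, as a module over $\FF_2[\overline{w}_2,\dots,\overline{w}_{n+1}]$, $\overline{\ell}_n(A)$ is freely generated by $\ell_n'(A)$ on the monomials not divisible by $\overline{w}_{n+1}$ and by $\ell_n'/\delta(A)$ on those that are, giving
\[
\overline{\ell}_n(A)\;\cong\;\bigoplus_{\alpha_i\ge 0}\overline{w}_2^{\alpha_2}\cdots\overline{w}_n^{\alpha_n}\,\ell_n'(A)\ \oplus\ \bigoplus_{\substack{\beta_i\ge 0\\ \beta_{n+1}>0}}\overline{w}_2^{\beta_2}\cdots\overline{w}_{n+1}^{\beta_{n+1}}\,\ell_n'/\delta(A).
\]
On the other side, $\overline{E}_{n+1}(A)=L_n(A)[w_2,\dots,w_{n+1}]$ carries the differential ``multiply by the central class $w_{n+1}$, then apply the derivation $a\mapsto da$''; computing its cohomology one power of $w_{n+1}$ at a time gives $\ker d = E'_\infty(A)$ in degree $w_{n+1}^0$ and $H(L_n(A),d)=E'_\infty/d(A)$ in each positive power of $w_{n+1}$, i.e.\ the analogous decomposition with $E'_\infty$ and $E'_\infty/d$ replacing $\ell_n'$ and $\ell_n'/\delta$. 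Since $\overline{\eta}_n$ is $\FF_2[w_\bullet]$-linear and sends $\overline{\delta}(a)\mapsto da$ and $\overline{\phi}_i(a)\mapsto Sq_i a$, it respects both splittings, restricting to $\eta_n'$ on the $\overline{w}_{n+1}$-free summands and to $\eta_n'/d$ on the rest. Thus $\overline{\eta}_n$ is an isomorphism iff $\eta_n'$ and $\eta_n'/d$ both are; the ``only if'' direction is immediate, and it remains to show that $\eta_n'/d$ being an isomorphism forces the same of $\eta_n'$.

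For that implication I would argue directly using Proposition \ref{algebraicpushforward}. Surjectivity of $\eta_n'$ is easy: $E'_\infty(A)$ is generated by the classes $da$, which are the images $\eta_n'(\delta(a))$, together with any lifts of a generating set of $E'_\infty/d(A)$, and the latter are hit because $\eta_n'/d$ is onto. For injectivity I would run a short diagram chase through the maps $\tau\colon L_n(A)\to\ell_n'(A)$ and $i\colon\ell_n'(A)\to L_n(A)$, exploiting the three recorded identities: exactness of $L_n(A)\xrightarrow{\tau}\ell_n'(A)\to\ell_n'/\delta(A)\to 0$, the factorization $d=i\circ\eta_n'\circ\tau$, and $\tau\circ\eta_n'=0$ (together with $\tau(d(c))=0$ for coboundaries). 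Concretely: given $x\in\ker\eta_n'$, injectivity of $\eta_n'/d$ forces the image of $x$ in $\ell_n'/\delta(A)$ to vanish, so $x=\tau(b)$ for some $b$; applying $i$ gives $d(b)=i(\eta_n'(x))=0$, so $b\in E'_\infty(A)$; surjectivity of $\eta_n'/d$ then lets me write $b=\eta_n'(e)+d(c)$, whence $x=\tau(b)=\tau(\eta_n'(e))+\tau(d(c))=0$.

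The step I expect to be the main obstacle is the first one — confirming that relation $(4)$ really is the \emph{only} interaction between the $\overline{w}_i$ and the generators $\overline{\delta}(a),\overline{\phi}_i(a)$, so that the monomial filtration in the $\overline{w}_i$ genuinely splits the presentation, and similarly that the $w_{n+1}$-adic description of the differential on $\overline{E}_{n+1}(A)$ gives the stated cohomology. Once both decompositions are in hand and are seen to be compatible with $\overline{\eta}_n$, everything else is either formal or the bounded diagram chase above; a minor subtlety there is keeping straight which maps land in $\ell_n'(A)$, which in $L_n(A)$, and which in the subalgebra $E'_\infty(A)=\ker d\subseteq L_n(A)$.
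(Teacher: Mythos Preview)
Your proposal is correct and follows essentially the same route as the paper's proof: both decompose $\overline{\ell}_n(A)$ and $\overline{E}_\infty(A)$ over monomials in the $w_i$ (using that relation~(4) is the only one coupling the $\overline{w}_i$ to the other generators), reduce to showing $\eta_n'$ and $\eta_n'/d$ are isomorphisms, and then derive the former from the latter by the same diagram chase through $\tau$ and Proposition~\ref{algebraicpushforward}. Your version of the injectivity chase is in fact a bit more explicit than the paper's (which compresses the step ``$x=\tau(b)$ with $db=0$'' into the phrase ``$a$ is in the kernel of $d$''), but the content is identical.
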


The next lemma is the key to proving that $\overline{\eta}_n$ is an isomorphism.

\begin{lemma}\label{Eilenbergmaclanecomputation}
For $Z \in \GEM$, $\eta_n'/d$ is an isomorphism for $A=H^*(Z)$.
\end{lemma}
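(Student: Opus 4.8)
The plan is to reduce to the case $Z = K(\mathbb{Z}/2,m)$ by multiplicativity, and then to compare $\ell_n'/\delta(H^*(Z))$ and $E_\infty'/d(H^*(Z))$ by writing down explicit additive bases for both sides. Since $Z \in EM_{\FF_2}$ is a product of $K(\mathbb{Z}/2,m)$'s, and both functors $\ell_n'/\delta$ and $E_\infty'/d$ are defined by generators and relations in a way that is compatible with tensor products (for $E_\infty'/d$ this is the Künneth computation already carried out in the proof of Proposition \ref{generators of ss}, and for $\ell_n'/\delta$ it follows from the presentation), it suffices to check the single generator case. So fix $Z = K(\mathbb{Z}/2,m)$ with fundamental class $\iota$ in degree $m$.

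On the target side, the proof of Proposition \ref{generators of ss} already identifies $E_\infty'/d(H^*(Z))$ — the cohomology of the differential graded algebra $(L_n(H^*(Z)), d)$ modulo $w_i$ — as the free unstable algebra (subject only to the $Sq_0$-relation in the exceptional case $m=n$) on the classes (G1) $(Sq_I\iota)^2$ for $Sq_I \in A_m$ with leading term $Sq_i$, $i>n$; (G2) $Sq_I\iota$ for $Sq_I \in A_m$ with leading term $Sq_i$, $i<n$; and (G3) $Sq_nSq_I\iota + Sq_I\iota\, dSq_I\iota$ for $Sq_nSq_I \in A_m$. On the source side, I would argue that $\ell_n'/\delta(H^*(Z))$ is generated as an algebra by the images $\overline{\phi}_i(Sq_I\iota)$, and then use relations (6) and (7) of the $\ell_n'/\delta$ presentation to push $\overline{\phi}_i$ past the Steenrod operations: relation (7), $\overline{\phi}_0(Sq_j a) = \overline{\phi}_j(a)^2$, together with relation (6), lets one rewrite any $\overline{\phi}_i(Sq_I\iota)$ in terms of a restricted set of generators matching (G1)–(G3) above. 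Concretely, $\overline{\phi}_n$ maps onto the (G3)-type classes, $\overline{\phi}_i$ with $i<n$ onto the (G2)-type classes (via relation (6), which expresses $\overline{\phi}_k(Sq_i a)$ with $k>i$ in terms of lower ones, so that the genuinely new generators are the admissible ones), and the squares of these cover (G1). One then checks the Steenrod action and the $\overline{\phi}_k(ab)$-relation match under $\eta_n'/d$, and that there are no further relations on the source side — i.e. the listed generators of $\ell_n'/\delta$ are also free in the same sense — so that $\eta_n'/d$ is a bijection on a generating set inducing a bijection of free generators.

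The crux, and the step I expect to be the main obstacle, is \textbf{showing that $\ell_n'/\delta(H^*(Z))$ is not larger than $E_\infty'/d(H^*(Z))$} — that is, that relations (1)–(7) of the presentation of $\ell_n'/\delta$ already force the algebra to be (at most) as small as the free object on the (G1)–(G3) generators. Surjectivity of $\eta_n'/d$ is comparatively easy: every (G1)–(G3) class is visibly in the image, by Proposition \ref{propF} / the definition of $\eta_n'$ together with relation (7). The difficulty is that the presentation of $\ell_n'/\delta$ has many generators $\overline{\phi}_i(a)$ for \emph{all} $a$ and all $0 \le i \le n$, and one must use the relations to reduce this to the small admissible generating set without a priori knowing the answer. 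I would handle this by a filtration/induction argument on the length and degree of admissible monomials $Sq_I$: relation (6) strictly decreases a suitable complexity measure (it expresses $\overline{\phi}_k(Sq_i a)$, $k>i$, in terms of $\overline{\phi}$'s of $Sq^j a$ with smaller relevant indices), relation (3) controls the Steenrod action, and relation (7) handles the $i=0$ and squaring cases, so that by induction every element is a polynomial in the claimed generators. Combined with the lower bound from surjectivity onto the free object $E_\infty'/d$, a dimension count in each degree then forces $\eta_n'/d$ to be an isomorphism. The exceptional case $m=n$, where the extra relation $Sq_0\iota = (Sq_n\iota + \iota d\iota)^2$ appears on both sides, must be tracked separately but matches up.
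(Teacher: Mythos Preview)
Your proposal is essentially correct and uses the same ingredients as the paper---the (G1)--(G3) description of $E'_\infty/d$ from Proposition~\ref{generators of ss}, and the Adem-type relation~(6) and squaring relation~(7) of the $\ell'_n/\delta$ presentation to reduce the $\overline{\phi}_i(Sq_I\iota)$ to an admissible set matching (G1)--(G3). The difference is in packaging: rather than arguing surjectivity plus an upper bound on generators plus a dimension count, the paper constructs an explicit \emph{inverse} algebra map $\zeta: E'_\infty/d \to \ell'_n/\delta$ sending each (G1)--(G3) generator to the obvious $\overline{\phi}$-class, checks that $\eta'_n/d \circ \zeta = \mathrm{id}$ (immediate), and then proves $\zeta$ is surjective by exactly the relation-(6),(7) reduction you describe. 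This bypasses any dimension count: once $\zeta$ is surjective and is a one-sided inverse, $\eta'_n/d$ is an isomorphism. Your filtration/induction step \emph{is} the paper's surjectivity-of-$\zeta$ argument, just stated on the other side of the map.

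One small remark: the paper does not first reduce to a single $K(\mathbb{Z}/2,m)$ via a tensor-product compatibility of $\ell'_n/\delta$; it runs the argument directly for an arbitrary finite product. Your reduction is valid (the Cartan relation~(2) and the Steenrod relations do make $\ell'_n/\delta$ send tensor products to tensor products), but it is an extra step you would need to justify, and it is not actually needed---the inverse-map argument works uniformly for all $Z \in EM_{\mathbb{F}_2}$ with no extra effort.
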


\begin{proof}
We will use notation from Proposition \ref{generators of ss} (see \Cref{notation:admissible}).
Write $Z = \prod_{s \in S_m}K(\ZZ/2,m)$, where $S_*$ is a finite nonnegatively graded set, set $|s| = m$ for an element $s \in S_m$, and let $A_{|s|,|s|}$ be the admissible sequences of excess $<|s|$ of degree $<|s|$ (see \cref{notation:admissible}).
In the proof of Proposition \ref{generators of ss}, it is shown that $E'_{\infty}/d(H^*(Z))$ is generated by the nonzero elements of the below list:
\begin{enumerate}[label = (G\arabic*)]
    \item $Sq_0Sq_I \iota_s$ for $(i_1,\dots,i_k)=I \in A_{|s|,|s|}$ satisfying $i_1> n$.
    \item $Sq_I \iota_s$ with leading term $Sq_i$ for $i< n$.
    \item $Sq_n Sq_I \iota_s + Sq_I \iota_s dSq_I \iota_s$ for $(n,I) \in A_{|s|,|s|}$
\end{enumerate}
Moreover, the only relation among the generators is when  $|s|=0$, where the relation $\iota_s^2 = \iota_s$ holds.

We can thus define an algebra map $\zeta: E_\infty' (H^*(Z))/d \to \ell_n'(H^*(Z))/d$ via the following prescription:

\begin{itemize}
    \item Send $Sq_0Sq_I \iota_s$ in $(G1)$ to $\phi_0(Sq_I\iota_s)$.
    \item Send $Sq_I\iota_s$ in $(G2)$ to $\phi_i(Sq_{I'}\iota_s)$, where $I'$ is $I$ with $i$ removed.
    \item Send $Sq_n Sq_I \iota_s + Sq_I \iota_s dSq_I \iota_s$ in $(G3)$ to $\phi_n(Sq_I\iota_s)$.
\end{itemize}

Relation $(7)$ for $\ell_n'/\delta$ shows that this does give an algebra homomorphism, and it is clear that that $\eta_n'/d \circ \zeta = id$, implying that $\zeta$ is injective. Thus it remains to check that $\zeta$ is surjective. Due to the multiplicative and additive relations for $\phi_i$, it suffices to check that $\phi_i(g)$ is in the image for each multiplicative generator $g$ of $H^*(Z)$.
\begin{enumerate}
    \item $\phi_0(Sq_I\iota_s)$ is automatically in the image for $Sq_I$ leading with $Sq_i$ for $i >n$. For $i\leq n$, relation $(7)$: $(\phi_i(a))^2 =\phi_0(Sq_i a)$ shows that it is in the image. 

\item For $i>0$, $\phi_i(Sq_I \iota)$ is automatically in the image for $Sq_I$ leading with $Sq_j$ for $j \geq i$. For $j<i$, the Adem relations $(6)$ lets us express $\phi_i(Sq_I \iota)$ using terms involving 
$$\sum \phi_\alpha(Sq_\beta \iota)$$
with $\alpha \leq \beta < j$ which is in the image.
\end{enumerate}
\end{proof}

Now we can complete the proof of Theorem \ref{thmc}.

\begin{proof}[Proof of \Cref{thmc}]
By \Cref{constructeta}, it suffices to see that $\overline{\eta}_n$ is an isomorphism for $Z \in \GEM$, but this follows from Lemma \ref{isomorphismreduction} and Lemma \ref{Eilenbergmaclanecomputation}.
\end{proof}

\printbibliography 
\end{document}